\newcommand{\mathscript}{\mathscr}
\newcommand{\del}{\partial}
\renewcommand{\vec}[1]{\geovec{#1}}
\newcommand{\hatu}{\widehat{\vec u}}
\newcommand{\hatf}{\widehat{\vec f}}
\newcommand{\shatf}{\widehat{f}}
\newcommand{\shatu}{\widehat{u}}
\newcommand{\Cetad}{C_{\underline{\eta}}}
\newcommand{\Cetau}{C_{\overline{\eta}}}
\newcommand{\Cfu}{C_{\overline{\vec f}}}
\newcommand{\cE}{\ensuremath{\mathscript E}\xspace}
\newcommand{\hP}{\ensuremath{\mathcal P}\xspace}
\newcommand{\rN}{\ensuremath{\mathbb N}\xspace}
\newcommand{\rP}{\ensuremath{\mathbb P}\xspace}
\newcommand{\rR}{\ensuremath{\mathbb R}\xspace}
\newcommand{\rV}{\ensuremath{\mathbb V}\xspace}
\newcommand{\reals}{\rR}
\newcommand{\W}{\ensuremath{\Omega}\xspace}
\newcommand{\qp}[1]{\ensuremath{\left({#1}\right)}}
\newcommand{\qpbigg}[1]{\ensuremath{\bigg(#1\bigg)}}
\newcommand{\qb}[1]{\ensuremath{\!\left[{#1}\right]}}
\newcommand{\powqp}[2]{\ensuremath{\qp{#2}^{\kern -.2em\lower .3ex\hbox{\scriptsize $#1$}}\kern-.3em}}
\newcommand{\norm}[1]{\ensuremath{\left|#1\right|}}
\newcommand{\abs}[1]{\ensuremath{\left|#1\right|}}
\newcommand{\Norm}[1]{\ensuremath{\left\|#1\right\|}}
\newcommand{\ent}[3]{\ensuremath{#1(#2 \,\vert\,#3) }}
\newcommand{\ensemble}[2]{\ensuremath{\left\{ #1:\;#2 \right\}}}
\newcommand{\jump}[1]{\ensuremath{\left\llbracket #1\right\rrbracket}}
\renewcommand{\d}{\ensuremath{\,\mathrm{d}}}
 \providecommand{\D}{\ensuremath{\mathrm{D}}}
\newcommand{\registered}{
  \ensuremath{{}^{\bigcirc\!\;\!\!\!\!\!\!\!\;\text{\sc r}}}}
\newcommand{\AND}{\ensuremath{\text{ and }}}
\newcommand{\Oh} {\operatorname{O}}                   
\newcommand{\pd}[2]{\ensuremath{\partial_{#1}{#2}}\xspace} 
\newcommand{\pdt}{\pd t {}}                       
\newcommand{\pdx}{\pd x {}}
\newcommand{\transpose}{{\boldsymbol\intercal}}   
\newcommand{\Transpose}[1]{\ensuremath{{#1}^{\transpose}}}
\newcommand{\Hess}{\ensuremath{\D^2}}
\renewcommand{\vec}[1]{\ensuremath{\boldsymbol{#1}}}
\newcommand{\geovec}[1]{\ensuremath{\boldsymbol{#1}}}
\newcommand{\CC}{\ensuremath{\operatorname C}\xspace}
\newcommand{\LL}{\ensuremath{\operatorname L}\xspace}
\newcommand{\WW}{\ensuremath{\operatorname W}\xspace}
\newcommand{\cont}[1]{\ensuremath{\CC^{#1}}}
\newcommand{\leb}[1]{\ensuremath{\LL_{#1}}}
\newcommand{\sob}[2]{\ensuremath{\WW^{#1}_{#2}}}
\newcommand{\poly}[1]{\ensuremath{\rP}^{#1}}
\newcommand{\fes}[1]{\ensuremath{\fespace^{#1}}}
\newcommand{\EOC}{\ensuremath{\operatorname{EOC}}\xspace}
\newcommand{\Forall}{\:\forall\:}
\newcommand{\Foreach}{\quad\Forall}
\newcommand{\downto}{\ensuremath{\searrow}}
\newcommand{\Program}[1]{\textsf{#1}\xspace}
\newcommand{\matlab}{\Program{Matlab\registered}}
\providecommand{\ListParameters}{}
\renewcommand{\ListParameters}{
  \setlength{\topsep}{0em}
  \setlength{\leftmargin}{0em}
  \setlength{\itemsep}{0ex}
  \setlength{\parsep}{.5ex}
  \setlength{\itemindent}{\labelsep}
  \addtolength{\itemindent}{\labelwidth}
}
\newcounter{LetterListItem}
\renewcommand{\theLetterListItem}{(\alph{LetterListItem})}
\newcounter{NumberListItem}
\renewcommand{\theNumberListItem}{\arabic{NumberListItem}}
\newcounter{QuestionListItem}
\renewcommand{\theQuestionListItem}{\textbf{Question \arabic{QuestionListItem}}}
\newcounter{RomanListItem}
\renewcommand{\theRomanListItem}{(\roman{RomanListItem})}
\newcounter{StepsItem}
\providecommand{\ListParameters}{}
\renewcommand{\ListParameters} {
  \setlength{\topsep}{0em}
  \setlength{\leftmargin}{0em}
  \setlength{\itemsep}{0ex}
  \setlength{\parsep}{.5ex}
  \setlength{\itemindent}{\labelsep}
  \addtolength{\itemindent}{\labelwidth}
}
\newcommand{\Proofname}{Proof}
\newcommand{\pdfformat}[1]{
  \provideboolean{pdfoutput}
  \setboolean{pdfoutput}{#1}
  \ifthenelse{\boolean{pdfoutput}}%
	     {\typeout{using pdf}
               \usepackage{pdfsync}
	       \usepackage[pdftex]{graphicx,xcolor}
	       \newcommand{\graphext}{pdf}
	       \newcommand{\graphextex}{pdf_t}
	       \usepackage{epsfig}
	       \usepackage{tikz}
	       \usepackage{rotating}
	     }
	     {
	       \typeout{using eps}
	       \usepackage[dvips]{graphicx,xcolor}
	       \newcommand{\graphext}{eps}
	       \newcommand{\graphextex}{eps_t}
	       \usepackage{epsfig}
	       \usepackage{tikz}
	       \usepackage{rotating}
	     }
}
\newcommand{\tn}{{t_n}}
\renewcommand{\geovec}[1]{\boldsymbol{#1}}
\renewcommand{\vec}[1]{\ensuremath{\boldsymbol
    {\mathsf{#1}}}}
\par\noindent{\bf \Proofname\ #1}}
\newtheorem{The}[subsection]{Theorem}
\newtheorem{Pro}[subsection]{Proposition}
\newtheorem{Pro*}{Proposition}
\newtheorem{Lem}[subsection]{Lemma}
\newtheorem{Rem}[subsection]{Remark}
\newtheorem{Defn}[subsection]{Definition}
\newtheorem{Hyp}[subsection]{Assumption}
\newcommand{\V}[1]{\fes^{#1}}
\newcommand{\EI}{\ensuremath{\operatorname{EI}}\xspace}
\newcommand{\ie}{i.e.,\xspace}
\newcommand{\eint}[3]{\qpbigg{\int_{#1}^{#2}#3^2}^{\kern -.2em\lower .3ex\hbox{\scriptsize $1/2$}}\kern-.3em}
\renewcommand{\fes}{\ensuremath{\mathbb V}}
\renewcommand{\fes}{\ensuremath{\mathbb V}}
\renewcommand{\sin}[1]{\ensuremath{\operatorname{sin}\left(#1\right)}}
\newcommand{\avg}[1]{\lbrace #1 \rbrace}
\newcommand{\doublelbrace}{
  \mathrel{\ooalign{\cr\kern+1.0pt$\lbrace$\cr\kern-0.5pt$\lbrace$}}}
\newcommand{\doublerbrace}{
  \mathrel{\ooalign{$\rbrace$\cr\kern-1.5pt$\rbrace$\cr\kern-1.0pt}}}
\renewcommand{\avg}[1]{\doublelbrace #1 \doublerbrace}
\providecommand{\qb}[1]{\ensuremath{\!\left[{#1}\right]}}
\title
{A posteriori analysis of  discontinuous Galerkin schemes for systems of hyperbolic conservation laws}
\author{
  Jan Giesselmann\footnotemark[2]
  \
  \footnotemark[5]
  \
  \footnotemark[6]
  \and
  Charalambos Makridakis\footnotemark[3]
  \
  \footnotemark[5]
  \and
  Tristan Pryer\footnotemark[4]
  \
  \footnotemark[5]
  \
  \footnotemark[7]
}
\date{\today}
\begin{document}

\maketitle

\renewcommand{\thefootnote}{\fnsymbol{footnote}}

\footnotetext[2]
             {Institute of Applied Analysis and Numerical Simulation,
               University of Stuttgart,
               Pfaffenwaldring 57,
               D-70563 Stuttgart,
               Germany
               (\email{{jan.giesselmann@mathematik.uni-stuttgart.de}})
             }
     
\footnotetext[3]
             {
               School of Mathematical and Physical Sciences,
               University of Sussex,
               Brighton, 
               BN1 9QH, UK,
               (\email{{C.Makridakis@sussex.ac.uk}})
             }
             
\footnotetext[4]
             {
               Department of Mathematics and Statistics,
               Whiteknights,
               PO Box 220,
               Reading,
               RG6 6AX,
               England UK,
               (\email{{T.Pryer@Reading.ac.uk}} )
             }

\footnotetext[5]
             {
               The authors were supported by the  the FP7-REGPOT project ``ACMAC--Archimedes Center for Modeling, Analysis and Computations'' of the University of Crete (FP7-REGPOT-2009-1-245749).
             }
\footnotetext[6]
             { 
               J.G. was also partially supported by  the 
               German Research Foundation (DFG) via SFB TRR 75 ``Tropfendynamische Prozesse unter extremen Umgebungsbedingungen''.
             }

\footnotetext[7]
             {
               T.P. was also partially supported by the EPSRC grant EP/H024018/1 and an LMS travel grant 41214.
             }

\renewcommand{\thefootnote}{\arabic{footnote}}

\begin{abstract}
  In this work we construct reliable a posteriori estimates for some
  discontinuous Galerkin schemes applied to nonlinear
  systems of hyperbolic conservation laws. We make
  use of appropriate \emph{reconstructions} of the discrete solution
  together with the \emph{relative entropy} stability framework.
  
  The methodology we use is quite general and allows for a posteriori
  control of discontinuous Galerkin schemes with standard flux choices
  which appear in the approximation of conservation laws.

  In addition to the analysis, we conduct some numerical benchmarking 
  to test the robustness of the resultant estimator.
\end{abstract}

\section{Introduction}
Hyperbolic conservation laws play an important role in many physical
and engineering applications.  One example is the description of
non-viscous compressible flows by the Euler equations.  Hyperbolic
conservation laws in general only have smooth solutions up to some
finite time even for smooth initial data.  This makes their analysis
and the construction of reliable numerical schemes challenging.  The development of discontinuities poses 
significant challenges to their numerical simulation. Several successful schemes were developed so far 
and are mainly based on 
finite differences,   finite volume and discontinuous Galerkin (dG) finite
element schemes.  For an overview on these schemes we refer to
\cite{GR96,Kro97,LeV02, Cockburn2003, HW08} and their references.  
In this work we are interested in a posteriori error control of hyperbolic systems while solutions 
are still smooth.  Our main tools are appropriate reconstructions of the discontinuous Galerkin schemes considered
and relative entropy estimates. 

The first systematic a posteriori analysis for numerical approximations of scalar conservation laws accompanied with corresponding adaptive algorithms, can be traced back to 
\cite{KO00, GM2000}, see also \cite{Cockburn2003, DMO07} and their references. These estimates were derived by employing 
Kruzkov's estimates. A posteriori results for systems were derived in \cite{Laforest2008,Laforest2004} for  front tracking and Glimm's schemes, see also
\cite{KLY2010}. For recent a posteriori analysis for well balanced schemes   for a damped semilinear wave equation we refer to \cite{AmadoriGosse13}.

We aim at providing a rigorous a posteriori error estimate for
semidiscrete dG schemes applied to systems of hyperbolic conservation
laws which are of optimal order. The extension of these results to
fully discrete schemes is obviously an important point but exceeds the
scope of the work at hand.  Our analysis is based on an extension of
the reconstruction technique, developed mainly for 
discretisations of parabolic problems, see \cite{Mak07} and references
therein, to space discretisations in the hyperbolic setting. The main
idea of the reconstruction technique is to introduce an
intermediate function, which we will denote $\widehat u$, which solves
a \emph{perturbed} partial differential equation (PDE). This perturbed
PDE is constructed in such a way that this $\widehat u$ is sufficiently
close to both the approximate solution, denoted $u_h$ and the exact
solution to the conservation law, denoted $u$. Then,  typically 
\begin{equation}
  \Norm{u - u_h} \leq \Norm{u - \widehat u} + \Norm{\widehat u - u_h},
\end{equation}
 where $\Norm{\widehat u - u_h}$ can be controlled explicitly and $ \Norm{u - \widehat u}  $ is estimated  
  using perturbation  stability  techniques.
%
 For systems of hyperbolic conservation laws admitting a convex entropy 
 the \emph{relative entropy} technique, introduced in \cite{Daf79,Dip79} provides a  natural
stability framework in the case where one of the two  functions involved in the analysis is a Lipschitz solution of the conservation law. 
This technique is based on the fact that usually systems of hyperbolic
 conservation laws are endowed with an 
 entropy/entropy flux pair. For conservation laws describing physical
 systems this notion of entropy follows from the physical one. 
The entropy/entropy flux pair gives rise to an admissibility condition
 for weak solutions, cf. Definition \ref{def:esol}, which leads to the notion
 of entropy solutions. 
It can also be used to define the notion of relative entropy between two solutions.
In case of a convex entropy the relative entropy can be used to control 
the $\leb{2}$ distance.
It can be used to obtain a stability result, Theorem \ref{lem:ltstab},
 which implies uniqueness of Lipschitz solutions in the class of entropy solutions.
One drawback of this stability framework is that a Gronwall type argument has to
be employed such that the error estimate depends exponentially on
time.  
There are two features of the relative entropy framework which need to be taken into account when
constructing the reconstruction $\hat u.$
If the relative entropy is to be used to compare $u, \hat u$ one of the two needs to be Lipschitz. 
As $u$ may be discontinuous, $\hat u$ needs to be Lipschitz.
Secondly, the relative entropy is an $\leb{2}$ framework, thus, the residuals in the perturbed equation
satisfied by $\hat u$ need to be in $\leb{2}$.


Relative entropy techniques for the a priori error  analysis of approximations of {systems} of conservation laws were first used in 
\cite{AMT2004}.
For other works concerning analysis of schemes for  systems of
conservation laws see, e.g.\ \cite{JR05,JR06}.  
For discontinuous
Galerkin/Runge--Kutta (dGRK) schemes a priori estimates can be found
in \cite{ZS04,ZS10}. 
 In \cite{HH02} the authors
 use  a goal oriented framework providing error indicators  for a
space-time dG scheme. These indicators are computable, provided  that certain  dual problems are well posed.  
Asymptotic nodal superconvergence is
investigated in a series of papers, see \cite{BA11} and references therein. In \cite{DMO07} the authors provide an
a posteriori estimate for the $\leb{1}$ error of dGRK schemes approximating a scalar conservation law, see also \cite{Ohl09}
for an overview on a posteriori error analysis for hyperbolic conservation laws.

The novelty of this work is that it provides a posteriori estimates for
dG schemes for  nonlinear \emph{systems} of conservation laws. Notice we do not assume  anything  
on the exact solution apart from the fact that it takes values on a compact set known a priori. 
That said,  the final estimate is    \emph{conditional,} i.e., holds under assumptions  on the approximation and its reconstruction, see \cite{MN06, Mak07}, which 
can be verified 
 a posteriori.  
It must be noted, however, that our estimates are essentially valid before the formation of shocks.  In the case where the entropy solution is discontinuous, our error estimator does not
converge to zero if the meshwidth goes to zero.
This is explained in detail in Remark \ref{rem:des} and is an expected   direct consequence of the fact that in the relative entropy framework the Lipschitz constant
of one of the solutions, which are compared to each other, enters the error estimate. The extension of our approach to the case 
of non-smooth solutions is a very challenging problem which is  currently under investigation.  The need of introducing reconstruction operators imposes some restrictions 
on the permitted discrete fluxes used in the dG method, see Remark 3.1. 
We present our
analysis  in  the one dimensional case.  An extension of our results
to several space dimensions would require a generalised reconstruction
technique while the other arguments would be analogous.

The remainder of this paper is organized as follows: In \S
\ref{sec:claw} we give some background on hyperbolic conservation laws
and their stability via the relative entropy method.  In \S
\ref{sec:scheme} we describe the numerical schemes under
consideration.  In \S \ref{sec:rec} we provide some background on
reconstruction methods and we discuss the reconstruction
procedure which we employ here and study its properties.  In \S \ref{sec:err} we combine the
reconstruction and the relative entropy methodology to derive an a
posteriori error estimate.  Finally, in \S \ref{sec:num} we show some
numerical experiments employing the estimates derived in \S
\ref{sec:err}, studying their asymptotic properties.

\section{Preliminaries, conservation laws and relative entropy}
\label{sec:claw}

Given the standard Lebesgue space notation
\cite{Cia78,Evans:1998} we begin by introducing the Sobolev
spaces. Let $\W \subset \reals$ then
\begin{gather}
  \sob{k}{p}(\W)
  := 
  \ensemble{\phi\in\leb{p}(\W)}
  {\D^{\alpha}\phi\in\leb{p}(\W), \text{ for } \norm{\alpha}\leq k},
\end{gather}
which are equipped with norms and seminorms
\begin{gather}
  \Norm{u}_{\sob{k}{p}(\W)}
  := 
  \begin{cases}
    \qp{\sum_{\norm{\alpha}\leq k}\Norm{\D^{\alpha} u}_{\leb{p}(\W)}^p}^{1/p} &\text{ if } p \in [1,\infty)
    \\
    \sum_{\norm{\alpha}\leq k}\Norm{\D^{\alpha} u}_{\leb{\infty}(\W)} &\text{ if } p = \infty 
  \end{cases}
  \\
  \norm{u}_{\sob{k}{p}(\W)}
  :=
  \Norm{\D^k u}_{\leb{p}(\W)} 
\end{gather}
respectively, where derivatives $\D^{\vec\alpha}$ are understood in a
weak sense. 

We use the convention that when derivatives act on a vector valued
multivariate function, $\geovec u = \Transpose{\qp{u_1,
    \dots, u_d}}$, it is meant componentwise, that is
$\partial_{x} \geovec u = \Transpose{\qp{\partial_x u_1,
    \dots, \partial_{x} u_d}}$ denotes a column vector. The derivative
of a field, $q$ say, with respect to the dependent variable is denoted
$\D q = {\qp{\partial_{{u_1}} q(\vec u), \dots, \partial_{{u_d}} q(\vec
    u)}}$ is a row vector. The matrix of second derivatives of $q$ is
\begin{equation}
  \Hess q(\vec u)
    :=
    \begin{bmatrix}
      \partial_{{u_1,u_1}} q(\vec u)
      ,\dots ,
      \partial_{{u_1,u_d}} q(\vec u)
      \\
      \vdots\qquad \ddots \qquad\vdots
      \\
      \partial_{{u_d,u_1}} q(\vec u)
      ,\dots ,
      \partial_{{u_d,u_d}} q(\vec u)
      \end{bmatrix}.
  \end{equation}
  For a vector field $\geovec f$, we denote its Jacobian by
  $\D{\geovec f}$ which is also a $d\times d$ matrix and its Hessian
  as $\Hess \vec f$ which is given as a $3$--tensor. We also make use
  of the following notation for time dependent Sobolev (Bochner)
  spaces:
\begin{equation}
  \leb{\infty}(0,T; \sob{k}{p}(\W))
  :=
  \ensemble{u : [0,T] \to \sob{k}{p}(\W)}
           {\sup_{t\in [0,T]} \Norm{u(t)}_{\sob{k}{p}(\W)} < \infty}.
\end{equation}

Let $U \subset \rR^d$ convex be the state space.
We consider the following
first order (system of) conservation law(s) 
\begin{equation}\label{eq:cl}
  \pdt \vec u(x,t) + \pdx \vec f(\vec u(x,t)) =0 \text{ for } (x,t) \in (0,1) \times (0,\infty).
\end{equation}
We complement (\ref{eq:cl}) with the following initial and boundary conditions
\begin{equation}\label{eq:ibc}
  \vec u(0,t)= \vec u(1,t) \text{ for } t \in  (0,\infty) \quad \text{and} \quad \vec u(x,0) = \vec u_0(x) \text{ for } x \in (0,1)
\end{equation}
for some function $\vec u_0 \in
\leb{\infty}((0,1),U).$ The solution, which in general is only in
$\leb{\infty}((0,1)\times (0,\infty), U )$, takes values in the state
space and we assume the flux function $\vec f: U \rightarrow \rR^d$ is
at least $\cont{2}(U)$.

In particular, in our estimates, the assumed regularity will depend on
the polynomial degree of the employed dG method.  Throughout this
paper we will assume that there is an entropy/entropy-flux pair
$(\eta,q)$ with $\eta \in C^2(U, \rR)$ strictly convex and $q \in
C^1(U,\rR)$ associated to \eqref{eq:cl} in such a way that
\begin{equation}\label{eflux}
 \D q 
 =
 \D \eta
 \D {\vec f}.
\end{equation}
The existence of an entropy flux implies that
\begin{equation}\label{commute}
  \Transpose{\qp{\D\vec f}}\D^2\eta = \D^2\eta \D\vec f.
\end{equation}
It is readily verifiable that strong solutions of \eqref{eq:cl}
satisfy the additional conservation law
\begin{equation}\label{eq:ecl}
  \pdt \eta(\vec u) + \pdx q(\vec u) =0.
\end{equation}

For general background on hyperbolic conservation laws the reader is
refered to \cite[c.f.]{Daf10,LeF02}.
Note that not every system of hyperbolic conservation laws admits a convex entropy/entropy flux pair,
 see \cite[Sec. 5.4]{Daf10},
even if it is physically meaningful.
The derivation of a posteriori error estimates for systems of hyperbolic conservation laws admitting
 only poly or quasiconvex entropies is  beyond the scope of this work.
 It is common that
solutions of \eqref{eq:cl} develop discontinuities after finite time.
This motivates developing a notion of \emph{weak solution}. As weak
solutions, which satisfy the equation in the distributional sense are
not unique attention is restricted to so
called 
\emph{entropy solutions} $\vec u \in \leb{\infty}((0,1)\times
(0,\infty),U)$.  The concept of entropy solution guarantees uniqueness
of solutions for scalar problems and can be interpreted as enforcing
that solutions are compatible with the 2nd law of thermodynamics.
However, it is important to note that entropy solutions need not be
unique for systems of conservation laws in multiple space dimensions
even if these are endowed with a convex entropy, \cite{DS10}.
In this context it should be noted that the relative entropy technique, see Lemma \ref{lem:ltstab}, guarantees 
uniqueness for entropy solutions if and only if they are Lipschitz.
  The
notion of entropy solution can be motivated by the \emph{vanishing
  viscosity} framework. Consider the regularised PDE
\begin{equation}
  \label{eq:visc-soln}
  \pdt \vec u^\epsilon + \pdx \vec f(\vec u^\epsilon) = \epsilon \pd{xx}{\vec u^\epsilon}.
\end{equation}
Inserting the solution of (\ref{eq:visc-soln}) into the conservation
law (\ref{eq:ecl}) we see
\begin{equation}
  \begin{split}
    \pdt \eta(\vec u^{\epsilon}) + \pdx q(\vec u^{\epsilon})
    &=
    \D
    \eta(\vec u^\epsilon)
    \pdt \vec u^\epsilon
    +
    \D
    q(\vec u^\epsilon)
    \pdx \vec u^\epsilon
    \\
    &=
    \D
    \eta(\vec u^\epsilon)
    \qp{\epsilon \pd{xx}{\vec u^\epsilon} - \pdx \vec f(\vec u^\epsilon)}
    +
    \D
    q(\vec u^\epsilon)
    \pdx \vec u^\epsilon
    \\
    &=
    \D
    \eta(\vec u^\epsilon)
    \qp{\epsilon \pd{xx}{\vec u^\epsilon}
      -
      \D \vec f(\vec u^\epsilon) \pdx \vec u^{\epsilon}
    }
    +
    \D
    q(\vec u^\epsilon)
    \pdx \vec u^\epsilon
    \\
    &=
    \D
    \eta(\vec u^\epsilon)
    \epsilon \pd{xx}{\vec u^\epsilon}
    \\
    &=
    \epsilon \pdx\qp{\D \eta(\vec u^\epsilon) \pdx \vec u^{\epsilon}}
    -
    \epsilon \qp{\D^2\eta(\vec u^\epsilon) \pdx \vec u^\epsilon} \pdx \vec u^\epsilon
    \\
    &\leq
    \epsilon \pdx\qp{\D \eta(\vec u^\epsilon) \pdx\vec u^{\epsilon}}.
  \end{split}
\end{equation}
The limit as $\epsilon\to 0$ yields the following definition
\begin{Defn}[entropy solution]\label{def:esol}
  A function $\vec u\in \leb{\infty}((0,1) \times [0,\infty),U)$ is
  said to be an \emph{entropy solution} of the initial boundary value
  problem (\ref{eq:cl})--\eqref{eq:ibc}, with associated
  entropy/entropy-flux pair $(\eta,q)$, if
  \begin{equation}\label{eq:wsol}
    \int_0^\infty 
    \int_0^1
    \vec u 
    \cdot 
    \pdt \vec \phi 
    +
    \vec f(\vec u) 
    \cdot
    \pdx \vec \phi \d x \d t 
   + 
    \int_0^1
    \vec u_0 
    \cdot 
    \vec \phi (\cdot,0)
    \d x 
    =
    0
    \Foreach
    \vec \phi \in \cont{\infty}_c(S^1\times [0,\infty),\rR^d)
  \end{equation}
  and 
  \begin{equation}\label{eq:esol}
    \int_0^\infty 
    \int_0^1 
    \eta(\vec u) \pdt \phi 
    +
    q(\vec u)\pdx  \phi \d x \d t 
    + 
      \int_0^1
    \eta(\vec u_0 )
    \phi (\cdot,0)
    \d x 
    \geq 0  \Foreach \phi \in C_c^\infty(S^1\times [0,\infty),[0,\infty)).
  \end{equation}
  Here $S^1$ (the 1--sphere) refers to the unit interval $[0,1]$ with
  matching endpoints.
\end{Defn}
\begin{Rem}[scalar case]
  In the scalar case entropy solutions are required to satsify \eqref{eq:esol} for every convex entropy/entropy flux pair.
\end{Rem}

For $\vec u\in \leb{\infty}((0,1) \times
(0,\infty),U)$ the distribution $\pdt\eta(\vec u) +\pdx q(\vec u)$ has
a sign and therefore is a measure, \ie we may replace the smooth test
functions in Definition \ref{def:esol} by Lipschitz continuous ones.
Stability of solutions and in particular uniqueness of Lipschitz
solutions within the class of entropy solutions is obtained via
relative entropy arguments, see \cite[Chapter 5]{Daf10} and references
therein.

\begin{Defn}[relative entropy and entropy-flux]
  We define the \emph{relative entropy}, $\ent{\eta}{\vec u}{\vec v}$,
  and \emph{relative entropy-flux}, $\ent{q}{\vec u}{\vec v}$, of two
  generic vector valued functions $\vec v \AND \vec w$ with values in $U$ to be
  \begin{equation}
    \label{def:relen}
    \begin{split}
    \ent{\eta}{\vec v}{\vec w} 
    &:=
    \eta(\vec v) 
    -
    \eta(\vec w)
    -
    \D \eta(\vec w) (\vec v-\vec w)
    \\
    \ent{q}{\vec v}{\vec w} 
    &:=
    q(\vec v)
    -
    q(\vec w) 
    -
    \D \eta(\vec w) (\vec f(\vec v)-\vec f(\vec w)).
  \end{split}
\end{equation} 
Note that $\ent{\eta}{\vec v}{\vec w} $ and $\ent{q}{\vec v}{\vec w}$
are not symmetric in $\vec v,\, \vec w.$
\end{Defn}

\begin{Hyp}[values in a compact set]
 We will assume throughout the paper that the exact solution $\vec u$ of \eqref{eq:cl} takes values in $\mathfrak{O},$\ie 
\[ u(x,t) \in \mathfrak{O} \quad \forall \ (x,t) \in (0,1) \times (0,\infty),\]
where $\mathfrak{O} $ be a compact and convex subset of $U$.
\end{Hyp}

\begin{Rem}[Bounds on flux and entropy]
 Due to the regularity of $\vec f$ and $\eta$ and the compactness of $\mathfrak{O}$ there are
  constants $0 < \Cfu < \infty$ and $0< \Cetad < \Cetau < \infty$ such
  that
  \begin{equation}
    \label{eq:consts}
\norm{\Transpose{\vec v} \Hess \vec f(\vec u) \vec v} 
    \leq \Cfu \norm{\vec v}^2,    
    \qquad 
 \Cetad 
    \norm{\vec v}^2
    \leq 
    \Transpose{\vec v} 
    \Hess \eta(\vec u)
    \vec v
    \leq \Cetau \norm{\vec v}^2
  \Foreach \vec v \in \reals^d, \vec u \in \mathfrak{O},
  \end{equation}
  where $\norm{\cdot}$ is the Euclidean norm for vectors.
  Note that $\Cfu$, $\Cetad$ and $\Cetau$ can be explicitly computed from $\mathfrak{O}$, $\vec f$ and $\eta.$
\end{Rem}

\begin{Lem}[Gronwall inequality]
  \label{lem:gronwall}
  {Given $T>0$, let $\phi(t)\in\cont{0}([0,T])$ and $a(t), b(t) \in
  \leb{1}([0,T])$ all be nonnegative functions with $b$ non-decreasing and satisfying
  \begin{equation}
 \phi(t) \leq \int_0^t a(s) \phi(s) \d s + b(t).
  \end{equation}}
  Then 
  \begin{equation}
    \phi(t) \leq b(t) \exp\qp{\int_0^t a(s) \d s} \Foreach t \in [0, T].
  \end{equation}
\end{Lem}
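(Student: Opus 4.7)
The plan is the classical integrating-factor proof. First, set $\Phi(t) := \int_0^t a(s)\phi(s)\d s$ and $A(t) := \int_0^t a(s)\d s$; since $a\in \leb{1}([0,T])$ and $\phi\in \cont{0}([0,T])$ is bounded on the compact interval, the product $a\phi$ lies in $\leb{1}([0,T])$, so $\Phi$ is absolutely continuous on $[0,T]$ with $\Phi'(t) = a(t)\phi(t)$ for almost every $t$. The hypothesis then rewrites as $\phi(t) \leq \Phi(t) + b(t)$.

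The next step is to convert this into a differential inequality and linearise it via an integrating factor. Multiplying $\phi(t) \leq \Phi(t) + b(t)$ by $a(t)\geq 0$ gives $\Phi'(t) \leq a(t)\Phi(t) + a(t)b(t)$ for almost every $t$, whence
\[ \frac{\d}{\d t}\qb{\Phi(t)\e{-A(t)}} = \qp{\Phi'(t) - a(t)\Phi(t)}\e{-A(t)} \leq a(t) b(t) \e{-A(t)}. \]
Integrating from $0$ to $t$, using $\Phi(0)=0$, and multiplying through by $\e{A(t)}$ yields
\[ \Phi(t) \leq \int_0^t a(s) b(s) \e{A(t)-A(s)}\d s. \]

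The final step uses the monotonicity of $b$: for $s\leq t$ we have $b(s)\leq b(t)$, so $b(t)$ may be pulled out of the integrand, and the remaining integral evaluates exactly as $\int_0^t a(s)\e{A(t)-A(s)}\d s = \e{A(t)} - 1$. Combining with $\phi(t)\leq\Phi(t)+b(t)$ then produces $\phi(t)\leq b(t)\e{A(t)}$, which is the claim.

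The argument is essentially routine; the only step that genuinely exploits the precise hypotheses (beyond integrability and continuity) is the extraction of $b$ from the integral, which requires the monotonicity assumption. Without it one would only obtain the weaker conclusion $\phi(t)\leq b(t) + \int_0^t a(s)b(s)\e{A(t)-A(s)}\d s$, and further structure on $b$ would be needed to recover the stated clean form.
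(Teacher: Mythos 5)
Your proof is correct. Note that the paper itself states this Gronwall lemma without proof (it is used as a classical result), so there is no paper argument to compare against; your integrating-factor proof --- setting $\Phi(t)=\int_0^t a(s)\phi(s)\,\mathrm{d}s$, observing $\Phi'\le a\Phi+ab$ a.e., differentiating $\Phi(t)e^{-A(t)}$, integrating, and then using the monotonicity of $b$ to pull $b(t)$ out and evaluate $\int_0^t a(s)e^{A(t)-A(s)}\,\mathrm{d}s=e^{A(t)}-1$ --- is the standard one and all steps are justified (in particular, $b$ non-decreasing on $[0,T]$ is bounded by $b(T)$, so the integrands are integrable, and absolute continuity of $\Phi e^{-A}$ legitimises the fundamental-theorem step).
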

As we will make use of a similar argument to derive our error estimate
let us give the proof of the following stability result which can be
found in \cite{Daf10}.
\begin{Lem}[$\leb{2}$--stability]
  \label{lem:ltstab}
  Let $ \vec u$ be an entropy solution of \eqref{eq:cl}--\eqref{eq:ibc}
  corresponding to initial data $\vec u_0$ and $ \vec v$ a Lipschitz
  solution of \eqref{eq:cl}--\eqref{eq:ibc} corresponding to initial
  data $\vec v_0.$ Let $\vec u$ and $\vec v$ take values in $\mathfrak{O}.$
  Then there exist constants $C_1,C_2>0$ such that
  \begin{equation}
    \Norm{\vec u(\cdot,t) - \vec v (\cdot, t) }_{\leb{2}(I)}
    \leq 
    C_1 \exp( C_2 t) \Norm{ \vec u_0 - \vec v_0}_{\leb{2}(I)}.
  \end{equation}

\end{Lem}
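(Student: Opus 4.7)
The plan is to use the relative entropy $\ent{\eta}{\vec u}{\vec v}$ as a quadratic surrogate for the squared $\leb{2}$-distance of $\vec u - \vec v$, and to derive a Gronwall-type differential inequality for its spatial integral. As a preliminary step, Taylor's theorem with integral remainder applied to $\eta$ around $\vec v$, combined with the two-sided bound on $\Hess \eta$ from \eqref{eq:consts}, yields the pointwise equivalence
$$
\tfrac{1}{2} \Cetad \, \norm{\vec u - \vec v}^2 \;\leq\; \ent{\eta}{\vec u}{\vec v} \;\leq\; \tfrac{1}{2} \Cetau \, \norm{\vec u - \vec v}^2
$$
on $\mathfrak{O}$, so it suffices to control $\int_0^1 \ent{\eta}{\vec u(\cdot,t)}{\vec v(\cdot,t)} \d x$.

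The heart of the proof is the chain-rule identity (valid for smooth $\vec u$, and as a distributional inequality for entropy solutions)
$$
\partial_t \ent{\eta}{\vec u}{\vec v} + \partial_x \ent{q}{\vec u}{\vec v} \;\leq\; -\,\partial_t\!\bigl(\D\eta(\vec v)\bigr)\,(\vec u - \vec v) \;-\; \partial_x\!\bigl(\D\eta(\vec v)\bigr)\,(\vec f(\vec u) - \vec f(\vec v)).
$$
To derive this I would combine the entropy inequality \eqref{eq:esol} for $\vec u$, the entropy identity \eqref{eq:ecl} for the Lipschitz solution $\vec v$, and the elementary cancellation $\D\eta(\vec v)\bigl[\partial_t(\vec u - \vec v) + \partial_x(\vec f(\vec u) - \vec f(\vec v))\bigr] = 0$ which follows from the weak form \eqref{eq:wsol} for $\vec u$ and the strong form for $\vec v$. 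Since $\vec v$ is Lipschitz I can substitute $\partial_t \vec v = -\D\vec f(\vec v)\,\partial_x \vec v$ and invoke the commutativity identity \eqref{commute} to rewrite the right-hand side as
$$
\partial_x \vec v \,\cdot\, \Hess \eta(\vec v)\,\bigl[\D\vec f(\vec v)(\vec u - \vec v) - (\vec f(\vec u) - \vec f(\vec v))\bigr].
$$
The bracketed quantity is exactly the Taylor remainder of $\vec f$ at $\vec v$, which by \eqref{eq:consts} is bounded in modulus by $\tfrac{1}{2}\Cfu \norm{\vec u - \vec v}^2$.

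Integrating over $(0,1)$ kills $\partial_x \ent{q}{\vec u}{\vec v}$ by the periodic boundary conditions \eqref{eq:ibc}, and applying the entropy equivalence gives
$$
\dd{t} \int_0^1 \ent{\eta}{\vec u}{\vec v}\d x \;\leq\; C \, \Norm{\partial_x \vec v}_{\leb{\infty}(S^1)} \int_0^1 \ent{\eta}{\vec u}{\vec v} \d x,
$$
with $C$ depending only on $\Cfu, \Cetad, \Cetau$. Lemma \ref{lem:gronwall} then yields an exponential bound on the integrated relative entropy, which translates, via the equivalence above applied once at time $t$ (lower bound) and once at $t=0$ (upper bound), into the claimed $\leb{2}$ stability with $C_1 = \sqrt{\Cetau/\Cetad}$ and $C_2 = \tfrac{1}{2} C \Norm{\partial_x \vec v}_{\leb{\infty}(S^1 \times [0,T])}$.

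The main obstacle is the rigorous justification of the relative entropy inequality when $\vec u$ is merely bounded and measurable: \eqref{eq:esol} must be tested against the Lipschitz test function $\D\eta(\vec v)$ (and \eqref{eq:wsol} against quantities like $\D\eta(\vec v)\vec v$), combined with time-regularisation to make the \emph{a priori} only distributional time derivatives meaningful. This is a standard but delicate density argument relying on $\vec u \in \leb{\infty}$ and $\vec v$ being Lipschitz; once it is secured, the remaining estimates are pointwise Taylor bounds and a direct application of Gronwall.
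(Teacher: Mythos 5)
Your proposal is correct and follows essentially the same route as the paper: the Dafermos relative entropy computation, testing the weak/entropy formulations with $\phi\,\D\eta(\vec v)$, the substitution $\pdt\vec v=-\D\vec f(\vec v)\pdx\vec v$ together with \eqref{commute}, the Taylor bounds from \eqref{eq:consts}, and Gronwall. The technical point you flag at the end (justifying the formal differential inequality for a merely bounded entropy solution) is handled in the paper exactly as you suggest, by working with Lipschitz test functions and the piecewise-linear-in-time cutoff \eqref{eq:test-function-phi} sent to a sharp cutoff at points of weak-$*$ continuity.
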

\begin{proof}
  Note that $\vec v$ satisfies \eqref{eq:esol} as an
  equality. Thus, for any Lipschitz continuous, non negative test
  function $\phi$ we have
  \begin{equation}
    \int_0^\infty 
    \int_0^1 
    \pdt \phi ( \eta(\vec u) - \eta(\vec v) ) 
    +
    \pdx \phi ( q(\vec u) - q(\vec v) ) \d x \d t
    + 
    \int_0^1 \phi(\cdot,0)  \qp{\eta(\vec u_0) - \eta(\vec v_0)}
    \d x
    \geq 0.
\end{equation}
Using the definition of relative entropy and relative entropy flux, we
may reformulate this as
\begin{equation}
  \label{Daf1}
  \begin{split}
  \int_0^\infty 
  \int_0^1
  \pdt \phi \qp{ \ent{\eta}{\vec u}{\vec v}
    + \D \eta(\vec v) \qp{\vec u - \vec v} }
  +
  \pdx \phi &\qp{ 
    \ent{q}{\vec u}{\vec v}
    +
    \D\eta(\vec v) \qp{\vec f(\vec u) - \vec f(\vec v)}}
  \d x \d t
  \\
  & \qquad \qquad  \qquad + 
  \int_0^1
  \phi(\cdot,0) \qp{\eta(\vec u_0) - \eta(\vec v_0)} \d x
  \geq 0.
\end{split}
\end{equation}
Upon using the Lipschitz continuous test function $ \vec \phi = \phi
\D\eta(\vec v)$ in \eqref{eq:wsol} for $\vec u$ and $\vec v,$
we obtain
\begin{multline}\label{Daf2}
 \int_0^\infty \int_0^1 \pdt( \phi \D\eta(\vec v)) (\vec u - \vec v)  +
    \pdx (\phi  \D\eta(\vec v)) (\vec f(\vec u) - \vec f(\vec v))  \d x \d t
+ \int_0^1\phi(\cdot,0) \D\eta(\vec v(\cdot, 0))(  \vec u_0 - \vec v_0 )\d x
=0.
\end{multline}
We use the product rule in \eqref{Daf2} and combine it with \eqref{Daf1} to obtain
\begin{equation}\label{Daf3}
  \begin{split}
    \int_0^\infty \int_0^1 \pdt \phi \ent{\eta}{\vec u}{\vec v}   +
    \pdx \phi  \ent{q}{\vec u}{\vec v}   \d x \d t  
    -
    \int_0^\infty \int_0^1 \phi( \pdt \vec v \D^2\eta(\vec v) (\vec u -& \vec v) + \pdx \vec v \D^2\eta(\vec v) (\vec f(\vec u) - \vec f(\vec v))  ) \\
    &
    + \int_0^1\phi(\cdot,0)  \ent{\eta}{\vec u_0}{\vec v_0}  \d x
\geq 0.
  \end{split}
\end{equation}
Using $\pdt \vec v= -D\vec f(\vec v)\pdx \vec v$ and \eqref{commute} we  find
\begin{equation}\label{Daf4}
  \begin{split}
    \int_0^\infty \int_0^1 \pdt \phi \ent{\eta}{\vec u}{\vec v}
    +
    \pdx \phi  \ent{q}{\vec u}{\vec v}
    \d x \d t  -
    \int_0^\infty \int_0^1 \phi( \pdx \vec v 
    &\D^2\eta(\vec v) (\vec f(\vec u) - \vec f(\vec v)  - \D\vec f(\vec v) (\vec u - \vec v) ) \\
    &\qquad\qquad\qquad+ \int_0^1\phi(\cdot,0)  \ent{\eta}{\vec u_0}{\vec v_0}  \d x
    \geq 0.
  \end{split}
\end{equation}
Now we fix $t>0$. Then for every $0<s<t$ and $\varepsilon >0$ we consider the test function
\begin{equation}
  \label{eq:test-function-phi}
  \phi(x,\sigma) = \left\{ \begin{array}{lcl} 
    1 & : & \sigma <  s \\
    1 - \frac{\sigma-s}{\varepsilon} & : & s < \sigma < s+ \varepsilon\\
    0 &:& \sigma >s+ \varepsilon 
  \end{array}\right..
\end{equation}
In this case we infer from \eqref{Daf4}
\begin{equation}\label{Daf5}
  \begin{split}
    -\frac{1}{\varepsilon}
    \int_s^{s+\varepsilon} \int_0^1 \ent{\eta}{\vec u}{\vec v}    \d x \d t  
    -
    \int_0^\infty \int_0^1 \phi( \pdx \vec v \D^2\eta(\vec v) (\vec f(\vec u) - \vec f(\vec v)  
    - &\D\vec f(\vec v) (\vec u - \vec v)) ) \d x \d t
    \\
&\qquad\qquad + \int_0^1 \ent{\eta}{\vec u_0}{\vec v_0}  \d x
\geq 0.
  \end{split}
\end{equation}
When sending $\varepsilon \rightarrow 0$ we find for all points $s$ of $ \leb{\infty}$-weak-*-continuity of $\eta(\vec u (\cdot,\sigma))$ in $(0,t)$ that
\begin{equation}\label{Daf6}
  - \int_0^1  \ent{\eta}{\vec u(x,s)}{\vec v(x,s)} 
  \d x
  -
  \int_0^s \int_0^1 
  \pdx \vec v \D^2\eta(\vec v) (\vec f(\vec u) - \vec f(\vec v)  - \D\vec f(\vec v) (\vec u - \vec v)) \d x \d t 
+ \int_0^1  \ent{\eta}{\vec u_0}{\vec v_0}  \d x
\geq 0.
\end{equation}
Upon using \eqref{eq:consts} we infer that for almost all $s \in (0,t)$
\begin{equation}\label{Daf7}
{\Cetad} \Norm{ \vec u(\cdot,s) - \vec v(\cdot,s)}_{\leb{2}(I)}^2
\leq  \Cetau \Norm{ \vec u_0 - \vec v_0}_{\leb{2}(I)}^2
+ \Cfu \Cetau  \int_0^s \norm{\vec v(\cdot,\sigma)}_{W^{1,\infty}(I)}  \Norm{ \vec u(\cdot,\sigma) - \vec v(\cdot,\sigma)}_{\leb{2}(I)}^2 \d t 
 .
\end{equation}
This equation, in fact, holds for all $s \in (0,t)$ as $\vec u$ is weakly lower semicontinuous.
Since $\vec v$ is Lipschitz continuous, applying Gronwall's Lemma completes the proof.
\end{proof}

%

\section{The semi-discrete scheme}
\label{sec:scheme}

We will discretise \eqref{eq:cl} in space using consistent dG finite
element methods.  Let $I:=[0,1]$ be the unit interval and choose $0 =
x_0 < x_1 < \dots < x_N = 1.$ We denote $I_n=[x_n,x_{n+1}]$ to be the
$n$--th subinterval and let $h_n:= x_{n+1}-x_n$ be its size.  Let
$\poly p(I)$ be the space of polynomials of degree less than or equal to $p$ {on $I$}, then
we denote
\begin{equation}
  \fes_p 
  :=
  \ensemble{\vec g : I \to \rR^d }
  { g_i \vert _{I_n} \in \poly p{(I_n)} \text{ for } i=1,\dots,d, \ n =0,\dots, N-1},
\end{equation}
where $\vec g = \Transpose{\qp{g_1,\dots,g_d}}$, to be the usual space
of piecewise $p$--th degree polynomials for vector valued functions
over $I$. In addition we define jump and average operators such that
\begin{equation}
  \begin{split}
    \jump{\vec g}_n
    &:= 
    \vec g(x_n^-) - \vec g(x_n^+)
    := 
    \lim_{s \searrow 0} \vec g(x_n-s) - \lim_{s \searrow 0} \vec g(x_n+s),
    \\
    \avg{\vec g}_n
    &:
    = 
    \frac{1}{2} \qp{\vec g(x_n^-) +\vec g(x_n^+)}
    :=
    \frac{1}{2} \qp{\lim_{s \searrow 0} \vec g(x_n-s) + \lim_{s \searrow 0} \vec g(x_n+s)}.
  \end{split}
\end{equation}
We will examine the following class of semi-discrete numerical schemes where
$\vec u_h \in C^1([0,T),\rV_p)$ is determined such that
\begin{equation}
  \label{eq:sch2}
  \begin{split}
    0
    &=
    \sum_{n=0}^{N-1} 
    \int_{I_n}
    \qp{
      \pdt \vec u_h \cdot \vec \phi
    + 
    \pdx \vec f(\vec u_h) 
    \cdot
    \vec \phi 
    }\d x
    \\
    &\qquad
    +
    \sum_{n=0}^{N-1}
    \qp{
    \vec F(\vec u_h(x_n^-),\vec u_h(x_n^+)) 
    \cdot
    \jump{\vec \phi}_n 
    -
    \jump{\vec f(\vec u_h) \cdot \vec \phi}_n
    }
    \Foreach \vec \phi \in \rV_p.
  \end{split}
\end{equation}
In the sequel we will assume that \eqref{eq:sch2} has a solution and
in particular that $\vec u_h$ takes values in $U$. We also set
\begin{equation}\label{bc}
  \jump{\vec u_h}_0 
  :=
  \vec u_h(x_N^-) - \vec u_h(x_0^+); 
  \qquad
  \avg{\vec u_h}_0 
  := 
  \frac{\vec u_h(x_0^+)+\vec u_h(x_N^-)}{2}
\end{equation}
to account for the periodic boundary conditions. Here $\vec F: U^2
\subset \rR^{2d} \rightarrow \rR^d$ is a numerical flux function. We
restrict our attention to a certain class of numerical flux
functions. We impose that there exists a function
\begin{equation}
  \label{nfluxes}
  \vec w : U \times U \rightarrow U
  \text{  such that  }
  \vec F(\vec u,\vec v)
  =
  \vec f(\vec w(\vec u,\vec v))
\end{equation}
and that there exists a constant $L>0$ such that $\vec w$ satisfies
\begin{equation}
  \label{eq:lipschitz fluxes}
  \norm{\vec w(\vec u,\vec v) - \vec u}
  \leq 
  L
  \norm{\vec u -  \vec v},
  \qquad
  \norm{\vec w(\vec u,\vec v) - \vec v}
  \leq L \norm{\vec u - \vec v} {\Foreach \vec u, \vec v \in U}.
\end{equation}

\begin{Rem}[restriction of fluxes]\label{rem:nf1}
  The reason for the restriction on the choice of fluxes will be made
  aparant in the sequel.  Our assumptions are met obviously by upwind
  as well as central fluxes for any system under consideration. This
  is also true for Godunov schemes employing exact Riemann solvers.
  For approximate Riemann solvers there are two classes
  \cite[Sec. 12.3]{LeV02}. Our assumption is generally satisfied for
  the class in which the numerical flux is computed by evaluating the
  exact flux on some intermediate state {extracted} from an
  approximate Riemann solution.  For the second class, which
  encompasses e.g. the Roe scheme, the situation is more involved.

  Let us look at some numerical fluxes in special cases: In case of
  inviscid Burgers equation, \ie $f(u) = \tfrac{u^2}{2}$, our
  condition is not satisfied for the local and global Lax--Friedrichs
  scheme. For the local Lax--Friedrichs scheme the numerical flux
  reads
  \begin{equation}
    F(u,v)= \frac{1}{2} (u^2 + v^2) + \max(|u|,|v|)(u-v)
  \end{equation}
  which is negative for $u=0$ and $v > 0.$ Therefore there can be no
  $w \in U$ satisfying $f(w)=F(0,v).$ The argument for the global
  Lax--Friedrichs scheme is analogous.
  
  For the inviscid Burger's equation both the Roe and the
  Engquist-Osher flux satisfy our condition, with
  \begin{equation}
    w_{\text{EO}}( a, b)=\sqrt{\frac{1}{2} a^2 (1+\operatorname{sgn}(a)) +\frac{1}{2} b^2 (1-\operatorname{sgn}(b)) 
    }
  \end{equation}
  and 
  \begin{equation}
    w_{\text{Roe}}( a, b)= \sqrt{\frac{1}{2} a^2 (1+\operatorname{sgn}(a+b)) +\frac{1}{2} b^2 (1-\operatorname{sgn}(a+b)) .
    }
  \end{equation}
  The situation is far more complicated for nonlinear systems.  In
  fact, for the $p$-system which is given by
  \begin{align*}
    \pd t u - \pd x v &=0\\
    \pd t v - \pd x p(u) &=0
  \end{align*}
  for some function $p$ with $p'>0,$ the question whether the Roe scheme fits into our framework hinges on whether $p$ is surjective.
\end{Rem}

\section{Reconstruction and projection operators}
\label{sec:rec}

To analyse the scheme \eqref{eq:sch2} we introduce reconstructions
which we denote by $\hatu$ and $\hatf$. For brevity we will ommit the
time dependency of all quantities in this section.

\begin{Defn}[reconstruction of $\vec u_h$]
  \label{def:hatu}
  The reconstruction $\hatu$ is the unique element of $\rV_{p+1}$ such
  that
  \begin{equation}
    \label{hatuuc}
    \begin{split}
      \sum_{n=0}^{N-1} 
      \int_{I_n} 
      \hatu \cdot \vec \phi \d x 
      =
      \sum_{n=0}^{N-1}
      \int_{I_n}
      {\vec u}_h \cdot \vec \phi \d x \Foreach \vec \phi \in \fes_{p-1}
    \end{split}
  \end{equation}
  {and}
  \begin{gather}
    \label{eq:hatu-bcs1}
    \hatu (x_n^+)
    =
    \vec w(\vec u_h(x_n^-),\vec u_h(x_n^+)) \AND
    \\
    \label{eq:hatu-bcs2}
    \hatu (x_{n+1}^-)
    =
    \vec w(\vec u_h(x_{n+1}^-),\vec u_h(x_{n+1}^+))
    \Foreach n\in [0, N-1].
  \end{gather}
{recalling that $\vec u_h(x_0^-):=\vec u_h(x_N^-),$ and $\vec
  u_h(x_N^+):=\vec u_h(x_0^+)$.}
\end{Defn}


\begin{Defn}[reconstruction of $\vec f(\vec u_h)$]
\label{def:hatf}
  The reconstruction $\hatf$ is the unique element of $\rV_{p+1}$ such
  that
  \begin{multline}
    \label{hatfuc}
      \sum_{n=0}^{N-1} \int_{I_n} 
      \pd x \hatf 
      \cdot
      \vec \phi \d x
      =
      \sum_{n=0}^{N-1} \int_{I_n} 
      \pd{x}{\vec f(\vec u_h)} 
      \cdot
      \vec \phi \d x
      \\+
      \sum_{n=0}^{N-1} \qp{
      {\vec f}(\vec w(\vec u_h(x_n^-),\vec u_h(x_n^+)))
      \cdot 
      \jump{\vec \phi}_n 
      -
      \jump{{\vec f}({\vec u}_h) 
        \cdot 
        \vec \phi}_n} \Foreach \vec \phi\in\fes_p
 \end{multline}
coupled with the skeletal ``boundary'' conditions that
\begin{equation}
  \label{eq:hatf-bcs}
  \hatf (x_n^+) 
  =
  {\vec f}(\vec w(\vec u_h(x_n^-),\vec u_h(x_n^+)))  \Foreach n\in \qb{0, N-1}.
\end{equation}
\end{Defn}

\begin{Lem}[continuity and orthogonality]
  The reconstructions $\hatu$ and $\hatf$ given in Definitions
  \ref{def:hatu} and \ref{def:hatf} respectively are continuous and
  $\hatf$ satisfies the orthogonality property
  \begin{equation}\label{ortho}
    \sum_{n=0}^{N-1}
    \int_{I_n} 
    \qp{\hatf - \vec f(\vec u_h)}
    \cdot 
    \vec \phi \d x =0   
    \Foreach \vec \phi \in \fes_{p-1}.
  \end{equation}
\end{Lem}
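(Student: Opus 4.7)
The plan is to treat $\hatu$ and $\hatf$ separately: continuity of $\hatu$ is immediate from the two endpoint prescriptions in Definition \ref{def:hatu}, while the orthogonality \eqref{ortho} and the continuity of $\hatf$ must both be extracted from the defining relation \eqref{hatfuc} by testing against functions supported in a single element.

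For $\hatu$ I would just note that at each node $x_n$ the right-hand limit $\hatu(x_n^+)$ equals $\vec w(\vec u_h(x_n^-),\vec u_h(x_n^+))$ by \eqref{eq:hatu-bcs1}, and applying \eqref{eq:hatu-bcs2} on the preceding element $I_{n-1}$ yields exactly the same value for the left-hand limit $\hatu(x_n^-)$. Hence the two limits match at every skeletal point and $\hatu\in\cont{0}(I,\rR^d)$, with the periodic identification at the endpoints built into \eqref{bc}.

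For $\hatf$ the argument proceeds in one calculation followed by two specialisations. Insert into \eqref{hatfuc} a test function $\vec\phi\in\fes_p$ supported in a single interval $I_n$ and abbreviate $\vec F_n:=\vec f(\vec w(\vec u_h(x_n^-),\vec u_h(x_n^+)))$. Because $\vec\phi$ vanishes outside $I_n$, the only nontrivial jumps are $\jump{\vec\phi}_n=-\vec\phi(x_n^+)$ and $\jump{\vec\phi}_{n+1}=\vec\phi(x_{n+1}^-)$, with analogous expressions for $\jump{\vec f(\vec u_h)\cdot\vec\phi}_n$ and $\jump{\vec f(\vec u_h)\cdot\vec\phi}_{n+1}$. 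Integrating by parts the left-hand side of \eqref{hatfuc} on $I_n$ and using the boundary condition \eqref{eq:hatf-bcs} to cancel the contribution at $x_n^+$, \eqref{hatfuc} collapses to
\[
  (\hatf(x_{n+1}^-)-\vec F_{n+1})\cdot\vec\phi(x_{n+1}^-)
  =
  \int_{I_n}(\hatf-\vec f(\vec u_h))\cdot\partial_x\vec\phi\,\d x
  \qquad \forall\,\vec\phi\in\fes_p|_{I_n}.
\]

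Now I would specialise first to $\vec\phi$ with $\vec\phi(x_{n+1}^-)=0$, killing the left-hand side. Antidifferentiation with zero constant at $x_{n+1}^-$ is an isomorphism from this subspace of $\fes_p|_{I_n}$ onto $\fes_{p-1}|_{I_n}$ (both have dimension $pd$), so $\partial_x\vec\phi$ ranges freely over $\fes_{p-1}|_{I_n}$ and the resulting identity yields the local orthogonality of $\hatf-\vec f(\vec u_h)$ to $\fes_{p-1}$ on $I_n$; summing over $n$ produces \eqref{ortho}. With that orthogonality in hand, the right-hand side of the displayed identity vanishes for \emph{every} admissible $\vec\phi$, so arbitrariness of $\vec\phi(x_{n+1}^-)\in\rR^d$ forces $\hatf(x_{n+1}^-)=\vec F_{n+1}$. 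Combined with $\hatf(x_{n+1}^+)=\vec F_{n+1}$ from \eqref{eq:hatf-bcs} applied on $I_{n+1}$, this delivers continuity of $\hatf$ across every skeletal node. The main place to be careful is the sign bookkeeping of the jump terms; once that is handled the $(p+2)d$ elemental degrees of freedom of $\hatf|_{I_n}$ split cleanly into the two prescribed endpoint values and the $pd$ orthogonality conditions, and no dimensional mismatch arises.
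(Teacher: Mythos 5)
Your proof is correct, but it runs in the opposite logical order to the paper's and uses a different selection of test functions. The paper first proves continuity of $\hatf$ directly: testing \eqref{hatfuc} with the $i$-th unit vector on a single $I_n$ (a piecewise constant), the volume terms telescope by the fundamental theorem of calculus and the jump terms collapse to give $\shatf_i(x_{n+1}^-)=f_i(\vec w(\vec u_h(x_{n+1}^-),\vec u_h(x_{n+1}^+)))$ outright, using only \eqref{eq:hatf-bcs}; with continuity in hand it then integrates \eqref{hatfuc} by parts globally, the skeletal terms cancel, and $\sum_n\int_{I_n}\qp{\hatf-\vec f(\vec u_h)}\cdot\pd{x}{\vec\phi}\d x=0$ for all $\vec\phi\in\fes_p$ yields \eqref{ortho} since elementwise differentiation maps $\fes_p$ onto $\fes_{p-1}$. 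You instead derive a single local identity on $I_n$ (which I checked; the sign bookkeeping is right), extract the orthogonality first from the subspace of test functions vanishing at $x_{n+1}^-$ via the dimension count $pd=pd$, and only then recover the missing trace condition $\hatf(x_{n+1}^-)=\vec f(\vec w(\vec u_h(x_{n+1}^-),\vec u_h(x_{n+1}^+)))$ from arbitrariness of $\vec\phi(x_{n+1}^-)$. The paper's route is slightly more elementary, since continuity needs nothing beyond constants and no prior orthogonality; yours has the merit of being entirely local and of making explicit that the $(p+2)d$ degrees of freedom of $\hatf|_{I_n}$ are exactly matched by the two endpoint traces and the $pd$ moment conditions, which also substantiates the uniqueness asserted in Definition \ref{def:hatf}. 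Two small presentational caveats: to reach your displayed identity you must integrate the $\pd{x}{\vec f(\vec u_h)}$ term by parts as well, not only the left-hand side as stated; and on the last cell $I_{N-1}$ the node $x_N$ carries the index $n=0$ with the periodic identification \eqref{bc}, so $\vec F_{n+1}$ there means $\vec F_0$ --- worth saying once, though the paper glosses over it too.
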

\begin{proof}
  The continuity of $\hatu$ follows from
  (\ref{eq:hatu-bcs1})--(\ref{eq:hatu-bcs2}).  To prove the continuity
  of $\hatf$ we choose $\vec \phi$ as the $i$-th unit vector on $I_n$
  and zero elsewhere. Then, upon letting $\hatf =
  \Transpose{\qp{\shatf_1, \dots, \shatf_d}}$ and $\vec f =
  \Transpose{\qp{f_1,\dots,f_d}}$ we obtain from (\ref{hatfuc})
  \begin{equation}
    \begin{split}
      \shatf_i (x_{n+1}^-) 
      -
      \shatf_i (x_n^+)
      &= 
      f_i (\vec u_h (x_{n+1}^- ))
      -
      f_i (\vec u_h (x_{n}^+ ))
      -
      f_i (\vec w(\vec u_h(x_n^-),\vec u_h(x_n^+)))
      \\
      &\qquad 
      +
      f_i( \vec w(\vec u_h(x_{n+1}^-),\vec u_h(x_{n+1}^+)))
      +
      f_i (\vec u_h (x_{n}^+ )) - f_i (\vec u_h (x_{n+1}^- )).
    \end{split}
  \end{equation}
  This implies 
  \begin{equation}
    \label{rightlim} 
    \shatf_i (x_{n+1}^- ) 
    =
    f_i
    (\vec w(\vec u_h(x_{n+1}^-),\vec u_h(x_{n+1}^+)))
  \end{equation}
  due to (\ref{eq:hatf-bcs}). This shows the continuity of $\hatf$.
  Using integration by parts in \eqref{hatfuc} we have that the
  boundary terms cancel due to our choice of $\hatf (x_n^+)$ and
  \eqref{rightlim}. Hence, we find
  \begin{equation}
    \sum_{n=0}^{N-1}
    \int_{I_n}
    \hatf \cdot \pd{x}{\vec \phi} \d x  
    =
    \sum_{n=0}^{N-1}
    \int_{I_n} {\vec f}({\vec u}_h)\cdot \pd{x}{\vec \phi} \d x \Foreach {\vec \phi} \in \rV_{p}
  \end{equation}
  concluding the proof.
\end{proof}

\begin{Defn}[$\leb{2}$ projection]
  \label{def:proj}
  We define $\hP_p : \qb{\leb{2}(I)}^d \to \fes_p$ to be the $L_2$
  orthogonal projection to $\fes_p$, that is,
  \begin{equation}
    \int_I \vec \psi \cdot \vec \phi \d x 
    =
    \int_I \hP_p(\vec \psi) \cdot \vec \phi \d x  \Foreach\vec\phi \in \rV_p.
  \end{equation}
  If $\psi\in\sob{p+1}{\infty}(I)$ the operator is well known \cite[c.f.]{Cia78} to satisfy the
  following estimate in $\leb{\infty}$:
  \begin{equation}
    \label{eq:l2-proj-error}
    \Norm{\psi - \hP_p \psi}_{\leb{\infty}(I_n)} 
    \leq
    C_p h_n^{p+1} \norm{\psi}_{\sob{p+1}{\infty}} \Foreach n=0,\dots,N-1.
  \end{equation}
\end{Defn}

\begin{Rem}[restriction of fluxes revisited]
  The assumption on the numerical flux functions \eqref{nfluxes} is
  posed such that we can choose our reconstructions $\hatu, \hatf$
  such that $\hatf(x_n)=\vec f(\hatu(x_n))$ for all $n.$ This is
  needed for the proof of Lemma \ref{lem:resest} and it will be
  elaborated upon in Remark \ref{rem:nf2}.
\end{Rem}

\section{Error estimates}
\label{sec:err}
In this section we make use of the reconstruction operators from
\S\ref{sec:rec} to construct a posteriori bounds for the generic
numerical scheme (\ref{eq:sch2}). Using these reconstructions we can
rewrite our scheme as
\begin{equation}
  \label{eq:schuc2}
  0
  =
  \sum_{n=0}^{N-1} 
  \int_{I_n} 
  \pdt {\vec u}_h
  \cdot
  \vec \phi \d x 
  +
  \sum_{n=0}^{N-1}
  \int_{I_n} \pdx \hatf \cdot \vec \phi \d x \Foreach \vec \phi \in \rV_p.
\end{equation}
Since we have that $\pdt {\vec u}_h$ and $\pdx\hatf$ are piecewise
polynomials of degree $p$ we may write (\ref{eq:schuc2}) as a
pointwise equation
\begin{equation}
  \label{def:residual}
  \pdt \hatu 
  +
  \pdx {\vec f}(\hatu) 
  =
  \pdx {\vec f}(\hatu)
  -
  \pdx \hatf 
  +
  \pdt\hatu
  -
  \pdt \vec u_h
  =:
  \vec R.
\end{equation}
Using the relative entropy technique we obtain the following
preliminary error estimate:
\begin{Lem}[error bound for the reconstruction]
  \label{lem:relenuc}
  Let  $\vec u$ be the entropy solution of  \eqref{eq:cl},\eqref{eq:ibc}  then the difference between ${\vec u}$ and the reconstruction
  $\hatu$ satisfies
\begin{multline}
\Cetad \Norm{ \vec u(\cdot,s) - \hatu(\cdot,s)}_{\leb{2}(I)}^2
\leq  \Cetau\Norm{ \vec u_0 - \hatu_0}_{\leb{2}(I)}^2
\\+ (\Cfu \Cetau  \Norm{\hatu}_{W^{1,\infty}} + \Cetau^2)\int_0^s \Norm{ \vec u(\cdot,\sigma) - \hatu(\cdot,\sigma)}_{\leb{2}(I)}^2 \d {\sigma} 
+ \Norm{\vec R}_{\leb{2}(I \times (0,s))}^2
\end{multline}
for every $s \in (0,\infty),$ provided $\hatu$ takes values in $\mathfrak{O}.$
\end{Lem}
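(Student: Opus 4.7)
\begin{Proof}[Proposal (sketch).]
The plan is to mimic the proof of Lemma \ref{lem:ltstab} line by line, replacing the exact Lipschitz solution $\vec v$ by the reconstruction $\hatu$. The crucial difference is that $\hatu$ no longer satisfies the conservation law exactly: by \eqref{def:residual} it satisfies the perturbed PDE $\pdt \hatu + \pdx \vec f(\hatu) = \vec R$, which after multiplication by $\D\eta(\hatu)$ yields a perturbed entropy identity $\pdt \eta(\hatu) + \pdx q(\hatu) = \D\eta(\hatu)\cdot\vec R$ (valid pointwise a.e.\ since $\hatu$ is $\cont{1}$ in time and Lipschitz in space, so that the chain rule applies in each element). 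Both of these identities introduce extra source terms proportional to $\vec R$, which must be tracked through the Dafermos calculation and absorbed by Young's inequality and Gronwall.

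Concretely, I would first reproduce \eqref{Daf1}, which uses only the entropy inequality for $\vec u$ and the definitions of relative entropy and relative entropy flux, with $\vec v$ replaced by $\hatu$. Next, using $\vec\phi := \phi\,\D\eta(\hatu)$ as a Lipschitz test function in the weak form \eqref{eq:wsol} for $\vec u$ \emph{and} in the weak form of the perturbed equation for $\hatu$, I would derive the analogue of \eqref{Daf2}, picking up an additional term $\int_0^\infty \int_0^1 \phi \, \D\eta(\hatu)\cdot \vec R \d x\d t$ on the right. Combining these with \eqref{Daf1} gives the analogue of \eqref{Daf3}. To pass to the analogue of \eqref{Daf4}, I would substitute $\pdt \hatu = -\pdx \vec f(\hatu) + \vec R$ (instead of the unperturbed $\pdt \vec v = -\D\vec f(\vec v)\pdx \vec v$), which contributes the additional term $-\int_0^\infty\int_0^1 \phi \,(\D^2\eta(\hatu)\vec R)\cdot(\vec u - \hatu) \d x \d t$. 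Using \eqref{commute} to cancel the remaining first-order cross terms, the net residual contribution consists of precisely this last term, the $\D\eta(\hatu)\cdot\vec R$ terms from the entropy identity and the weak equation cancelling out.

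I would then choose the cutoff $\phi$ from \eqref{eq:test-function-phi} and pass to the limit $\varepsilon\downarrow 0$, exactly as in \eqref{Daf5}--\eqref{Daf6}, to obtain a pointwise-in-$s$ inequality. The bulk stability term is controlled, as in \eqref{Daf7}, by $\Cfu\Cetau \Norm{\hatu}_{\sob{1}{\infty}}\Norm{\vec u - \hatu}_{\leb 2}^2$ (since here $\hatu$ plays the role of the Lipschitz solution). The additional residual term is estimated by the upper bound in \eqref{eq:consts} followed by Young's inequality:
\begin{equation*}
\left|\int_0^1 (\D^2\eta(\hatu)\vec R)\cdot(\vec u - \hatu)\d x\right|
\leq
\Cetau \Norm{\vec R(\cdot,\sigma)}_{\leb 2(I)} \Norm{(\vec u - \hatu)(\cdot,\sigma)}_{\leb 2(I)}
\leq
\tfrac12 \Norm{\vec R(\cdot,\sigma)}_{\leb 2(I)}^2 + \tfrac{\Cetau^2}{2} \Norm{(\vec u - \hatu)(\cdot,\sigma)}_{\leb 2(I)}^2,
\end{equation*}
which explains the appearance of $\Cetau^2$ inside the Gronwall integral and of $\Norm{\vec R}_{\leb 2(I\times(0,s))}^2$ as a driving term. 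The lower bound in \eqref{eq:consts} on the left-hand side then yields the desired integral inequality, which is directly in the form required to apply Gronwall's lemma (Lemma \ref{lem:gronwall}); that step is postponed to Theorem \ref{sec:err} where the full estimate for $\Norm{\vec u - \vec u_h}_{\leb 2}$ will be assembled.

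The main obstacle I expect is bookkeeping rather than conceptual: one must verify that all product-rule manipulations in the Dafermos argument remain valid when $\hatu$ merely lies in $\cont1([0,T];\fes_{p+1})\cap\cont0([0,T];\sob{1}{\infty}(I))$ rather than being a classical solution, and that the residual $\vec R$ (which may jump across element interfaces) enters the integral identities as an ordinary $\leb 2$ function, with no additional skeletal contributions --- this is exactly the purpose of the continuity of $\hatu$ and $\hatf$ secured in \S\ref{sec:rec}. Everything else is a careful, but otherwise routine, adaptation of the stability proof.
\end{Proof}
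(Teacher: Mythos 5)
Your proposal is correct and follows essentially the same route as the paper: it redoes the Dafermos relative-entropy calculation of Lemma \ref{lem:ltstab} with $\hatu$ in the role of the Lipschitz solution, testing with $\phi\,\D\eta(\hatu)$, noting the cancellation of the $\D\eta(\hatu)\cdot\vec R$ terms so that only the $\Transpose{(\vec u-\hatu)}\D^2\eta(\hatu)\vec R$ contribution survives after substituting $\pdt\hatu=-\D\vec f(\hatu)\pdx\hatu+\vec R$, and then using the cutoff in time, \eqref{eq:consts} and Young's inequality to arrive at the stated pre-Gronwall bound. The constants you obtain ($\Cfu\Cetau\Norm{\hatu}_{\sob{1}{\infty}}$, $\Cetau^2$, and the $\Norm{\vec R}_{\leb{2}}^2$ driving term) match the paper's, and deferring Gronwall to the subsequent theorem is exactly what the paper does.
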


\begin{proof}
Since $\hatu$ is Lipschitz continuous, we 
multiply \eqref{def:residual} by $\D\eta(\hatu) $ and find for any Lipschitz continuous, non negative test function $\phi$  
\begin{equation}
 \int_0^\infty \int_0^1 \pdt \phi ( \eta(\vec u) - \eta(\hatu) ) + \pdx \phi ( q(\vec u) - q(\hatu) ) - \phi \D\eta(\hatu) \vec R \d x \d t
+ \int_0^1\phi(\cdot,0) \big( \eta(\vec u_0) - \eta(\hatu_0) \big)\d x
\geq 0.
\end{equation}
Using the definition of relative entropy and relative entropy flux, we may reformulate this as
\begin{equation}\label{Daf1b}
  \begin{split}
 \int_0^\infty \int_0^1 \pdt \phi ( \ent{\eta}{\vec u}{\hatu}  + \D\eta(\hatu) &(\vec u - \hatu) ) +
 \pdx \phi ( \ent{q}{\vec u}{\hatu} + \D\eta(\hatu) (\vec f(\vec u) - \vec f(\hatu)) ) \d x \d t
 \\
 &\qquad 
 - \int_0^\infty \int_0^1  \phi \D\eta(\hatu) \vec R \d x \d t 
 + \int_0^1\phi(\cdot,0)\big(  \eta(\vec u_0) - \eta(\hatu_0) \big) \d x
    \geq 0.
  \end{split}
\end{equation}
Using the Lipschitz continuous test function $ \vec \phi = \phi \D\eta(\hatu)$ in  \eqref{eq:wsol} and \eqref{def:residual} we obtain
\begin{equation}\label{Daf2b}
  \begin{split}
 \int_0^\infty \int_0^1 \pdt( \phi D\eta(\hatu)) (\vec u - \hatu)  +
    \pdx (\phi  D\eta(\hatu)) (\vec f(\vec u) - \vec f(\hatu)) &-\phi \D\eta(\hatu) \vec R  \d x \d t\\
& + \int_0^1\phi(\cdot,0) \D\eta(\hatu(\cdot, 0))(  \vec u_0 - \hatu_0 )\d x
= 0.
  \end{split}
\end{equation}
We use the product rule in \eqref{Daf2b} and combine it with \eqref{Daf1b} to obtain
\begin{equation}\label{Daf3b}
  \begin{split}
    &\int_0^\infty \int_0^1 \pdt \phi \ent{\eta}{\vec u}{\hatu}
    +
    \pdx \phi  \ent{q}{\vec u}{\hatu}   \d x \d t 
    \\
    &\qquad -
    \int_0^\infty \int_0^1 \phi( \pdt \hatu \D^2\eta(\hatu) (\vec u - \hatu) + \pdx \hatu \D^2\eta(\hatu) (\vec f(\vec u) - \vec f(\hatu))  ) \d x \d t
    \\
    &\qquad + \int_0^1\phi(\cdot,0)  \ent{\eta}{\vec u_0}{\hatu_0}  \d x
    \geq 0.
  \end{split}
\end{equation}
Using the fact that $\pdt \hatu= - \D\vec f(\hatu)\pdx {\hatu} + \vec R$ and \eqref{commute} we find
\begin{equation}\label{Daf4b}
  \begin{split}
    &\int_0^\infty \int_0^1 \pdt \phi \ent{\eta}{\vec u}{\hatu}   +
    \pdx \phi  \ent{q}{\vec u}{\hatu}   \d x \d t  
    \\
    &\qquad -
    \int_0^\infty \int_0^1 \phi( \pdx \hatu D^2\eta(\hatu) (\vec f(\vec u) - \vec f(\hatu)  - D\vec f(\hatu) (\vec u - \hatu) )) 
    \\
    &\qquad 
    -  \int_0^\infty \int_0^1 \phi \Transpose{(\vec u - \hatu)} D^2\eta(\hatu) \vec R \d x \d t+ \int_0^1\phi(\cdot,0)  \ent{\eta}{\vec u_0}{\hatu_0}  \d x
    \geq 0.
  \end{split}
\end{equation}
  Now we fix $t>0$, then for every $0<s<t$ and $\varepsilon >0$ we consider the test function $\phi(x,\sigma)$ given in (\ref{eq:test-function-phi}).
In this case we infer from \eqref{Daf4}
\begin{multline}\label{Daf5b}
  -\frac{1}{\varepsilon}
  \int_s^{s+\varepsilon} \int_0^1 \ent{\eta}{\vec u}{\hatu}
  \d x \d t 
  -
  \int_0^\infty \int_0^1 
  \phi( \pdx \hatu \D^2\eta(\hatu) 
  (\vec f(\vec u) - \vec f(\hatu)  - \D\vec f(\hatu) (\vec u - \hatu)) )
  \d x \d t 
  \\
  -
  \int_0^\infty \int_0^1 \phi \Transpose{(\vec u - \hatu)} 
  \D^2\eta(\hatu) \vec R \d x \d t
  + \int_0^1 \ent{\eta}{\vec u_0}{\hatu_0}  \d x
  \geq 0.
\end{multline}
When sending $\varepsilon \rightarrow 0$ we find for all points $s$ of $ \leb{\infty} $-weak-*-continuity of $\eta(\vec u (\cdot,\sigma))$ in $(0,t)$ that
\begin{multline}\label{Daf6b}
  - \int_0^1  \eta(\vec u(x,s)| \hatu(x,s))    \d x 
  -
  \int_0^s \int_0^1 
  \pdx \hatu \D^2\eta(\hatu) 
  (\vec f(\vec u) - \vec f(\hatu)  - \D\vec f(\hatu) (\vec u - \hatu))
  \d x \d t \\
  -  
  \int_0^s \int_0^1  \Transpose{(\vec u - \hatu)} 
  \D^2\eta(\hatu) \vec R \d x \d t + \int_0^1  
  \ent{\eta}{\vec u_0}{\hatu_0}  \d x
  \geq 0.
\end{multline}
Upon using \eqref{eq:consts} and the convexity of $\mathfrak{O}$ we infer that for almost all $s \in (0,t)$
\begin{multline}\label{Daf7b}
  \Cetad \Norm{ \vec u(\cdot,s) - \hatu(\cdot,s)}_{\leb{2}(I)}^2
  \leq  \Cetau\Norm{ \vec u_0 - \hatu_0}_{\leb{2}(I)}^2
  \\
  + 
  (\Cfu \Cetau  \Norm{\hatu}_{W^{1,\infty}} + \Cetau^2)
  \int_0^s \Norm{ \vec u(\cdot,\sigma) - \hatu(\cdot,\sigma)}_{\leb{2}(I)}^2 
  \d {\sigma}
  +  \Norm{\vec R}_{\leb{2}(I \times (0,s))}^2.
\end{multline}
This equation, in fact, holds for all $s \in (0,t)$ as $\vec u$ is weakly lower semicontinuous.
\end{proof}

\begin{Rem}[values of $\hatu$]
 Note that the condition that $\hatu$ takes values in $\mathfrak{O}$
 can be verified in an a posteriori fashion, as $\hatu$ can be explicitly computed.
\end{Rem}

{Let us note that $\vec R$ can be explicitly computed locally in every
  cell using only information from that cell and traces from the
  adjacent cells.  Still we would like to estimate $\Norm{\vec
    R}_{\leb{2}}^2$ by quantities only involving $\vec u_h$.  There
  are two reasons for doing this: Firstly we expect the new bound to
  be computationally cheaper.  Secondly, we will use this new form to
  argue why we expect our estimator to be of optimal order.}  {To
  bound $\Norm{\vec R}_{\leb{2}}^2$ by $\vec u_h$ only, it is
  imperative to have precise information on $\vec u_h - \hatu$.}  To
this end we introduce the Legendre polynomials together with some of
their properties.

\begin{Pro}[Legendre polynomials]
  \label{pro:Legendre}
  Let $l_k$ denote the $k$-th Legendre polynomial on $(-1,1)$, and
  $l_k^n$ its transformation to the interval $I_n$, \ie
  \begin{equation}
    l^n_k(x)
    =
    l_k\qp{2\qp{\frac{x-x_n}{h_n}} -1}
    .
  \end{equation}
  Let $\alpha_k := \pdx l_{k}(1)$. Then $l_k^n$ has the following properties
  \begin{gather}
    \label{Legendre1}
    (-1)^k l^n_k(x_n)= l^n_k(x_{n+1})=1,
    \\
    \label{Legendre2} 
    (-1)^{k+1} h_n \pdx l^n_{k}(x_n) = h_n \pdx l^n_{k}(x_{n+1})=2 \alpha_k,
    \\
    \label{Legendre3} 
    \int_{I_n} l^n_{j}(x)l^n_k(x) \d x 
     =
    \frac{{2}h_n}{2k+1}\delta_{kj} \leq h_n,\\
    \label{Legendre4}
     | l^n_k (x)|  \leq 1 \ \forall x \in I_n.
  \end{gather}
\end{Pro}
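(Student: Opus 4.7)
The plan is to derive all four identities from the corresponding classical properties of the reference Legendre polynomial $l_k$ on $[-1,1]$ by pulling them back under the affine bijection
\begin{equation*}
  \psi_n: I_n \to [-1,1], \qquad \psi_n(x) := 2\frac{x-x_n}{h_n}-1,
\end{equation*}
so that $l_k^n = l_k \circ \psi_n$, and, by the chain rule, $\pdx l_k^n(x) = (2/h_n)\, l_k'(\psi_n(x))$. Once this pullback is in place, each of the four claims reduces to a classical statement about $l_k$ on the reference interval.

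First, for \eqref{Legendre1}, since $\psi_n(x_n) = -1$ and $\psi_n(x_{n+1}) = 1$, it reduces to the well-known endpoint values $l_k(1) = 1$ and $l_k(-1) = (-1)^k$. Next, for \eqref{Legendre2}, I would use the chain rule together with the endpoint derivative values of $l_k$. At the right endpoint $\pdx l_k^n(x_{n+1}) = (2/h_n)\, l_k'(1) = (2/h_n)\,\alpha_k$ by the definition of $\alpha_k$. At the left endpoint I would invoke the parity relation $l_k(-x) = (-1)^k l_k(x)$, whose derivative gives $l_k'(-1) = (-1)^{k+1}\alpha_k$. Multiplying through by $h_n$ and absorbing the sign $(-1)^{k+1}$ yields the stated identity.

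For the orthogonality relation \eqref{Legendre3}, I would change variables via $x = x_n + h_n(\xi+1)/2$, so that $\d x = (h_n/2)\d\xi$, reducing the integral to $(h_n/2)\int_{-1}^1 l_j(\xi)\, l_k(\xi)\, \d\xi$, and then apply the classical orthogonality formula $\int_{-1}^1 l_j\, l_k\, \d\xi = \tfrac{2}{2k+1}\delta_{jk}$. The bound $\tfrac{2h_n}{2k+1}\le h_n$ is then immediate for $k\ge 1$. Finally, \eqref{Legendre4} follows directly from the classical uniform bound $|l_k(\xi)| \leq 1$ on $[-1,1]$ composed with $\psi_n$, which maps $I_n$ bijectively onto $[-1,1]$.

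There is really no hard step; the only nonroutine ingredient is the endpoint derivative identity $l_k'(-1) = (-1)^{k+1} l_k'(1)$, but this is a one-line consequence of the parity of $l_k$. Everything else is a matter of keeping track of the Jacobian factor $2/h_n$ from $\psi_n$ and the definition $\alpha_k := \pdx l_k(1)$.
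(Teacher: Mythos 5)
The paper gives no proof of Proposition \ref{pro:Legendre} at all: the four identities are quoted as classical facts about Legendre polynomials, so your pullback argument under the affine map $\psi_n$ is precisely the verification the authors leave implicit, and it is correct in substance — the endpoint values, the parity-derived identity $l_k'(-1)=(-1)^{k+1}l_k'(1)$, the change of variables with Jacobian $h_n/2$, and the uniform bound $|l_k|\le 1$ are exactly the ingredients needed.

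One wrinkle in \eqref{Legendre3}, though: your own computation gives $\tfrac{h_n}{2}\cdot\tfrac{2}{2k+1}\,\delta_{jk}=\tfrac{h_n}{2k+1}\,\delta_{jk}$, not the $\tfrac{2h_n}{2k+1}\,\delta_{jk}$ stated in the proposition; the extra factor $2$ in the paper appears to be a typo, since with that constant the claimed inequality $\int_{I_n} l^n_j l^n_k \d x \le h_n$ would fail for $k=0$. With the value your change of variables actually produces, the bound $\le h_n$ holds for every $k\ge 0$, so your closing remark that it is ``immediate for $k\ge 1$'' is inconsistent with your own calculation — you have silently adopted the paper's constant while computing the correct one. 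You should either state the orthogonality relation with the corrected constant $\tfrac{h_n}{2k+1}$ or flag the typo explicitly; the inequality $\le h_n$ is what is actually used later (e.g.\ in the proof of Lemma \ref{lem:hatu}), and it is safe for all $k$ once the constant is right.
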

\begin{Lem}\label{lem:hatu}
  The reconstruction $\hatu$ given by Definition \ref{def:hatu}
  satisfies the following representation {for all $x \in I_n$}
  \begin{equation}
    \label{haturep}
    \begin{split}
      \qp{\hatu - \vec u_h}(x)
      &=
      \frac{1}{2}
      \bigg(
        (-1)^p
        \qp{
          \vec w(\vec u_h(x_n^-),\vec u_h(x_n^+))
          -
          \vec u_h(x_n^+)} 
        \\
        &\qquad\qquad\qquad\qquad
        +
        \vec w(\vec u_h(x_{n+1}^-),\vec u_h(x_{n+1}^+))
        -
        \vec u_h(x_{n+1}^-)
        \bigg)
        l^n_p(x)
        \\
        &\qquad+
      \frac{1}{2}  
      \bigg(
        (-1)^{p+1}
        \qp{
          \vec w(\vec u_h(x_n^-),\vec u_h(x_n^+))
          -
          \vec u_h(x_n^+)}
        \\
        &\qquad\qquad\qquad\qquad
        +
        \vec w(\vec u_h(x_{n+1}^-),\vec u_h(x_{n+1}^+))
        -
        \vec u_h(x_{n+1}^-)
        \bigg)
      l^n_{p+1}(x)
    \end{split}
  \end{equation}
  where $l_p^n \AND l^n_{p+1}$ are the rescaled Legendre polynomials from Proposition \ref{pro:Legendre}.
  Therefore,
  \begin{equation}
    \label{hatudiff}
    \Norm{\hatu - \vec u_h}_{\leb{2}(I_n)}^2
    \leq L^2 
    h_n 
    \qp{\big|\jump{\vec u_h}_{n}\big|^2  + \norm{\jump{\vec u_h}_{n+1}}^2 } 
  \end{equation}
  and
  \begin{equation}
    \label{hatuderv}
    \Norm{\partial_{x}^k{\hatu}}_{\leb{\infty}(I_n)} 
    \leq 
    \Norm{\partial_x^k \vec u_h}_{\leb{\infty}(I_n)}
    +
    L \frac{1}{h_n^k}
    b_k
    \qp{\big|\jump{\vec u_h}_n\big| 
      +
      \norm{\jump{\vec u_h}_{n+1}}}
  \end{equation}
  where $b_k:=  \abs{l_p}_{{k,\infty}}+\abs{l_{p+1}}_{{k,\infty}}.$
\end{Lem}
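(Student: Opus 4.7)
The plan is to expand $\hatu - \vec u_h$ in the Legendre basis on each cell, use the $L^2$-orthogonality constraint \eqref{hatuuc} to kill all low-order modes, and then solve a small $2\times 2$ linear system from the two interface conditions \eqref{eq:hatu-bcs1}--\eqref{eq:hatu-bcs2}. The derivation of the two norm estimates will then be immediate from \eqref{eq:lipschitz fluxes} and Proposition~\ref{pro:Legendre}.

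First, fix a cell $I_n$ and note that $\hatu - \vec u_h \in [\poly{p+1}(I_n)]^d$, so componentwise we may write
\begin{equation*}
\qp{\hatu - \vec u_h}(x) = \sum_{k=0}^{p+1} \vec c_k\, l_k^n(x), \qquad x \in I_n,
\end{equation*}
for some coefficient vectors $\vec c_k \in \reals^d$. Testing \eqref{hatuuc} cellwise against $l_k^n\, \vec e_i$ for $k=0,\dots,p-1$ and $i=1,\dots,d$, the $L^2$-orthogonality of the Legendre polynomials and \eqref{Legendre3} force $\vec c_0 = \dots = \vec c_{p-1} = \vec 0$. Hence on $I_n$,
\begin{equation*}
\hatu - \vec u_h = \vec c_p\, l_p^n + \vec c_{p+1}\, l_{p+1}^n.
\end{equation*}

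Next, I would evaluate this expansion at the two endpoints of $I_n$, using \eqref{Legendre1}. Writing $\vec A := \vec w(\vec u_h(x_n^-),\vec u_h(x_n^+))-\vec u_h(x_n^+)$ and $\vec B := \vec w(\vec u_h(x_{n+1}^-),\vec u_h(x_{n+1}^+))-\vec u_h(x_{n+1}^-)$, the interface conditions \eqref{eq:hatu-bcs1}--\eqref{eq:hatu-bcs2} become
\begin{equation*}
(-1)^p\vec c_p + (-1)^{p+1}\vec c_{p+1} = \vec A, \qquad \vec c_p + \vec c_{p+1} = \vec B,
\end{equation*}
a $2\times 2$ system (independent per component) with determinant $2(-1)^p \neq 0$. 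Solving gives
\begin{equation*}
\vec c_p = \tfrac{1}{2}\qp{(-1)^p\vec A + \vec B}, \qquad \vec c_{p+1} = \tfrac{1}{2}\qp{(-1)^{p+1}\vec A + \vec B},
\end{equation*}
which is exactly \eqref{haturep}.

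For \eqref{hatudiff}, using $L^2$-orthogonality of $l_p^n, l_{p+1}^n$ and \eqref{Legendre3},
\begin{equation*}
\Norm{\hatu - \vec u_h}_{\leb 2(I_n)}^2 \leq h_n\qp{\norm{\vec c_p}^2 + \norm{\vec c_{p+1}}^2} \leq h_n \qp{\norm{\vec A}^2 + \norm{\vec B}^2},
\end{equation*}
after bounding $\norm{\vec c_{p}}^2, \norm{\vec c_{p+1}}^2 \leq \tfrac{1}{2}(\norm{\vec A}^2+\norm{\vec B}^2)$. The Lipschitz assumption \eqref{eq:lipschitz fluxes} on $\vec w$ then gives $\norm{\vec A} \leq L \abs{\jump{\vec u_h}_n}$ and $\norm{\vec B} \leq L \abs{\jump{\vec u_h}_{n+1}}$, completing \eqref{hatudiff}. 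For \eqref{hatuderv}, the triangle inequality together with the scaling $\partial_x^k l_p^n = (2/h_n)^k l_p^{(k)}\circ\tau$ (where $\tau$ is the affine map from $I_n$ to $(-1,1)$) yields
\begin{equation*}
\Norm{\partial_x^k\hatu}_{\leb\infty(I_n)} \leq \Norm{\partial_x^k\vec u_h}_{\leb\infty(I_n)} + \frac{1}{h_n^k}\qp{\norm{\vec c_p}\,\abs{l_p}_{k,\infty}+\norm{\vec c_{p+1}}\,\abs{l_{p+1}}_{k,\infty}},
\end{equation*}
and bounding $\norm{\vec c_p}, \norm{\vec c_{p+1}} \leq L(\abs{\jump{\vec u_h}_n}+\abs{\jump{\vec u_h}_{n+1}})$ (up to absorbable constants) gives \eqref{hatuderv}. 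The only subtle point is correctly inverting the $2\times 2$ interface system and handling the parity of $p$; the rest is routine Legendre bookkeeping.
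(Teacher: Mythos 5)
Your proposal is correct and follows essentially the same route as the paper: expand $\hatu-\vec u_h$ cellwise in the Legendre basis, use \eqref{hatuuc} to annihilate the modes up to degree $p-1$, solve the $2\times 2$ endpoint system (whose inverse is exactly the one computed in the paper), and then deduce \eqref{hatudiff} and \eqref{hatuderv} from \eqref{Legendre1}--\eqref{Legendre4} together with the Lipschitz bounds \eqref{eq:lipschitz fluxes}. The only (shared, harmless) looseness is the bookkeeping of the factor $(2/h_n)^k$ versus $1/h_n^k$ in \eqref{hatuderv}, which the paper also leaves implicit.
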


\begin{proof}
  Letting $\hatu = \Transpose{\qp{\shatu_1,\dots, \shatu_d}}$ and
  $\vec u_h = \Transpose{\qp{(u_h)_1, \dots, (u_h)_d}}$ and writing
  $\shatu_i\vert_{I_n}$ and $(u_h)_i\vert_{I_n}$ as linear
  combinations of Legendre polynomials we see that (\ref{hatuuc})
  implies 
  \begin{equation}
    (\shatu_i -  (u_h)_i)( x) = \alpha l_p^n( x) + \beta l_{p+1}^n( x) \Foreach  x \in I_n
  \end{equation}
  for real numbers $\alpha, \beta$ depending on $i$ and $n$. Using
  \eqref{Legendre1} and the boundary conditions on $\hatu$
  (\ref{eq:hatu-bcs1})--(\ref{eq:hatu-bcs2}) we obtain
  \begin{gather}
    \alpha (-1)^p -\beta (-1)^p 
    =
    \shatu_i(x_n^+) -  (u_h)_i (x_n^+) 
    =
    w_i(\vec u_h(x_n^-),\vec u_h(x_n^+)) - (u_h)_i(x_n^+)
  \end{gather}
  and
  \begin{gather}
    \alpha + \beta
    =
    \widehat u_i(x_{n+1}^-) 
    -
    (u_h)_i (x_{n+1}^-)
    =
    w_i(\vec u_h(x_{n+1}^-),\vec u_h(x_{n+1}^+))
    -
    (u_h)_i(x_{n+1}^-).
  \end{gather}
  Since
  \begin{equation}
    \begin{bmatrix}
      (-1)^p & (-1)^{p+1} \\ 1 & 1 
    \end{bmatrix}^{-1}
    =
    \frac{1}{2}
    \begin{bmatrix}
      (-1)^p & 1 \\ (-1)^{p+1} & 1 
    \end{bmatrix}
  \end{equation}
we obtain \eqref{haturep}.
Equations \eqref{hatudiff} and \eqref{hatuderv} are immediate consequences of \eqref{haturep} upon using \eqref{Legendre1}--\eqref{Legendre4}.
\end{proof}

\begin{The}\label{The:re}
  Let $\vec f \in {\cont{2}}(U,\rR^d)$  satisfy (\ref{eq:ecl})
  and let $\vec u$ be an entropy solution  of
  \eqref{eq:cl} with periodic boundary conditions. Let $\hatu$ take values in $\mathfrak{O}$, then for $0 \leq t
  \leq T$ the error between the numerical solution $\vec u_h$ and $\vec u$
  satisfies
  \begin{multline}\label{eq:there}
      \Norm{\vec u(\cdot,t) - \vec u_h(\cdot, t)}_{\leb{2}(I)}^2
      \leq 2L^2 \sum_n 
                    h_n 
                      \qp{\big|\jump{\vec u_h}_{n}\big|^2  + \norm{\jump{\vec u_h}_{n+1}}^2 } 
           \\
       +
      2\Cetad^{-1} \Big( \Norm{\vec R}_{\leb{2}(I \times (0,t))}^2 + \Cetau \Norm{\vec u_0 - \hatu_0}_{\leb{2}(I)}^2 \Big)        
            \exp\qp{
                    \int_0^t
                        \frac{\Cetau \Cfu \Norm{\pdx \hatu(\cdot,s)}_{\leb{\infty}(I)} + \Cetau^2}{\Cetad}  
                      \d s} .    
  \end{multline}
\end{The}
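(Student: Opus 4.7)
The plan is to combine the two estimates already in hand: Lemma \ref{lem:relenuc} controls $\Norm{\vec u - \hatu}_{\leb{2}}^2$ in terms of $\vec R$, the initial error, and a Gronwall-type integral in $\Norm{\vec u - \hatu}_{\leb{2}}^2$ itself, while Lemma \ref{lem:hatu} controls $\Norm{\hatu - \vec u_h}_{\leb{2}}^2$ cell-by-cell in terms of the jumps of $\vec u_h$. A triangle inequality glues them together.

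First I would write
\begin{equation}
  \Norm{\vec u(\cdot,t) - \vec u_h(\cdot,t)}_{\leb{2}(I)}^2
  \leq
  2\Norm{\vec u(\cdot,t) - \hatu(\cdot,t)}_{\leb{2}(I)}^2
  +
  2\Norm{\hatu(\cdot,t) - \vec u_h(\cdot,t)}_{\leb{2}(I)}^2.
\end{equation}
The second term is bounded summing \eqref{hatudiff} over $n$, producing exactly the $2L^2 \sum_n h_n(|\jump{\vec u_h}_n|^2 + |\jump{\vec u_h}_{n+1}|^2)$ appearing in \eqref{eq:there}.

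For the first term I would apply Lemma \ref{lem:relenuc}, which gives
\begin{equation}
  \Norm{\vec u(\cdot,s) - \hatu(\cdot,s)}_{\leb{2}(I)}^2
  \leq
  \frac{\Cetau}{\Cetad}\Norm{\vec u_0 - \hatu_0}_{\leb{2}(I)}^2
  +
  \frac{1}{\Cetad}\Norm{\vec R}_{\leb{2}(I\times(0,s))}^2
  +
  \int_0^s a(\sigma) \Norm{\vec u(\cdot,\sigma)-\hatu(\cdot,\sigma)}_{\leb{2}(I)}^2 \d\sigma,
\end{equation}
with $a(\sigma) := \Cetad^{-1}(\Cfu\Cetau\Norm{\pdx \hatu(\cdot,\sigma)}_{\leb{\infty}(I)} + \Cetau^2)$. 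Setting $\phi(s) := \Norm{\vec u(\cdot,s) - \hatu(\cdot,s)}_{\leb{2}(I)}^2$ and $b(s) := \Cetad^{-1}(\Cetau \Norm{\vec u_0 - \hatu_0}_{\leb{2}(I)}^2 + \Norm{\vec R}_{\leb{2}(I\times(0,s))}^2)$, the function $b$ is non-decreasing in $s$, so Lemma \ref{lem:gronwall} yields
\begin{equation}
  \phi(t) \leq b(t) \exp\qp{\int_0^t a(s) \d s}.
\end{equation}

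Multiplying by $2$ and adding the jump contribution gives precisely \eqref{eq:there}. The only subtlety is verifying that Lemma \ref{lem:gronwall} applies with $b$ depending on $t$ (which it does, as the statement allows a non-decreasing $b$), and that $\Norm{\hatu}_{\sob{1}{\infty}}$ in Lemma \ref{lem:relenuc} can be replaced by $\Norm{\pdx\hatu}_{\leb{\infty}}$ up to absorbing $\leb{\infty}$ contributions into the constants arising from compactness of $\mathfrak{O}$; this is routine. There is no substantial obstacle here—the theorem is essentially a bookkeeping consequence of the two lemmas together with Gronwall.
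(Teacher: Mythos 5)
Your proposal is correct and follows essentially the same route as the paper: the paper likewise applies the Gronwall inequality (Lemma \ref{lem:gronwall}) to the relative-entropy bound of Lemma \ref{lem:relenuc} to control $\Norm{\vec u - \hatu}_{\leb{2}(I)}^2$, and then concludes via the triangle inequality together with \eqref{hatudiff}, which is exactly your decomposition (including the factors of $2$ and the jump term). Your closing remark about the $\Norm{\hatu}_{\sob{1}{\infty}}$ versus $\Norm{\pdx\hatu}_{\leb{\infty}}$ discrepancy is a fair observation about the paper's own statement of Lemma \ref{lem:relenuc}, and is harmless since the quantity actually produced in \eqref{Daf7b} involves only $\pdx\hatu$.
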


\begin{proof}
 Combining Lemma \ref{lem:gronwall} and Lemma \ref{lem:relenuc} we obtain
  \begin{multline}\label{eq:e1}
      \Norm{\vec u(\cdot,t) - \hatu(\cdot, t)}_{\leb{2}(I)}^2
      \leq 
      \Cetad^{-1} \Big( \Norm{\vec R}_{\leb{2}(I \times (0,t))}^2 +\Cetau \Norm{\vec u_0 - \hatu_0}_{\leb{2}(I)}^2 \Big) \\   \times      
            \exp\qp{
                    \int_0^t
                        \frac{\Cetau \Cfu \Norm{\pdx \hatu(\cdot,s)}_{\leb{\infty}(I)} + \Cetau^2}{\Cetad}  
                      \d s} .    
  \end{multline}
Upon using triangle inequality and \eqref{hatudiff} equation \eqref{eq:e1} implies the assertion of the Theorem.
\end{proof}

\begin{Rem}[values of $\hatu$]
  The $\leb{\infty}$ estimates based on \eqref{haturep} can be employed to verify a posteriori that $\hatu$ takes values in $\mathfrak{O}.$
\end{Rem}

\begin{Rem}[discontinuous entropy solutions]\label{rem:des}
  The estimate in Theorem \ref{The:re} does not require the entropy solution $\vec u$ to be continuous.
  However, in case $\vec u$ is discontinuous $\Norm{\pdx \hatu(\cdot,s)}_{\leb{\infty}(I)}$ is expected to behave like $\Oh(h^{-1})$.
  Therefore, the estimator in \eqref{eq:there} will (at best) be $\Oh(h^{p+1} \exp(h^{-1}))$ which diverges for $h \rightarrow 0.$
Thus, the estimator in \eqref{eq:there} is expected not to converge for $h \rightarrow 0$ if the entropy solution is discontinuous.
The same is true for the estimator derived in Theorem \ref{lem:apee}.
This is a consequence of the use of the relative entropy framework and the fact that the entropy solution does not need to be unique 
if it is not Lipschitz.
\end{Rem}

\begin{Lem}[inverse inequality {\cite[c.f.]{Cia78}}]
  \label{invineq}
  For every $k \in \rN$ there is a constant $C_{\text{inv}} > 0$ such that
  for any interval $J \subset \rR$ and any $\phi \in \poly{k}(J)$ the
  following inequality is satisfied
  \begin{equation}
    \Norm{\pdx \phi}_{L^2(J)} 
    \leq 
    \frac{C_{\text{inv}}}{|J|} 
    \Norm{\phi}_{L^2(J)}.
  \end{equation}
\end{Lem}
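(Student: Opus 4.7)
The plan is to prove the estimate on a fixed reference interval first, then transfer it to arbitrary $J$ by a scaling argument. Take $\hat{J} = [0,1]$ as the reference interval. On the finite-dimensional space $\poly{k}(\hat{J})$, all norms are equivalent, and in particular the map $\phi \mapsto \Norm{\pd{x}{\phi}}_{L^2(\hat{J})}$ is continuous with respect to the $L^2$-norm. Restricting to the (compact) unit sphere $\{\phi \in \poly{k}(\hat{J}) : \Norm{\phi}_{L^2(\hat{J})} = 1\}$, this continuous map attains a finite maximum, which we call $\hat{C}$. By homogeneity this yields
\begin{equation}
  \Norm{\pd{x}{\hat{\phi}}}_{L^2(\hat{J})} \leq \hat{C} \, \Norm{\hat{\phi}}_{L^2(\hat{J})} \quad \text{for every } \hat{\phi} \in \poly{k}(\hat{J}).
\end{equation}

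Next I would transfer the inequality to a general interval $J = [a, a+|J|]$ via the affine bijection $\Phi : \hat{J} \to J$ defined by $\Phi(\hat{x}) = a + |J|\hat{x}$. Given $\phi \in \poly{k}(J)$ set $\hat{\phi} := \phi \circ \Phi \in \poly{k}(\hat{J})$. A change of variables gives the two scalings
\begin{equation}
  \Norm{\phi}_{L^2(J)}^2 = |J| \, \Norm{\hat{\phi}}_{L^2(\hat{J})}^2, \qquad \Norm{\pd{x}{\phi}}_{L^2(J)}^2 = |J|^{-1} \, \Norm{\pd{x}{\hat{\phi}}}_{L^2(\hat{J})}^2,
\end{equation}
so combining these with the reference-interval estimate yields $\Norm{\pd{x}{\phi}}_{L^2(J)} \leq \hat{C} \, |J|^{-1} \, \Norm{\phi}_{L^2(J)}$. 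Setting $C_{\text{inv}} := \hat{C}$, which depends only on $k$, completes the argument.

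There is no real obstacle: the only slightly subtle point is that $\phi \mapsto \Norm{\pd{x}{\phi}}_{L^2(\hat{J})}$ is only a seminorm (it vanishes on constants), so one cannot directly invoke equivalence of norms; the correct statement is just continuity of a seminorm with respect to the $L^2$-norm on a finite-dimensional space, which is what the compactness argument on the unit sphere provides. Since the lemma is a standard result cited from \cite{Cia78}, the proof in the paper is presumably omitted or merely sketched along these lines.
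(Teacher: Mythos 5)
Your proof is correct: the compactness/seminorm argument on the reference interval combined with the affine scaling (which produces exactly the $|J|^{-1}$ factor) is the standard argument, and your constant $C_{\text{inv}}=\hat C$ indeed depends only on $k$. The paper gives no proof of this lemma, simply citing \cite{Cia78}, where the result is established along precisely these lines, so there is nothing to reconcile.
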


\begin{Lem}[a posteriori control on {$\vec R$}]
  \label{lem:resest}
  Let $\vec f \in {\cont{p+2}}(U,\rR^d)$ and satisfy (\ref{eq:consts}). It
  then holds that
  \begin{equation}
    \Norm{\vec R}_{\leb{2}(I)}^2 \leq 3 (E_1 + E_2 + E_3)
  \end{equation}
  with
  \begin{equation}
    \begin{split}
      E_1
      &:=
      \sum_{n=0}^{N-1} 
      h_n
      L^2
      \qp{
        \big|\jump{\pdt\vec u_h}_n\big|^2  + \big|\jump{\pdt\vec u_h}_{n+1}\big|^2 },
      \\
      E_2
      &:=
      \sum_{n=0}^{N-1} 
      4h_n L^2
      \qp{\big|\jump{\vec u_h}_n\big|^2  + \big|\jump{\vec u_h}_{n+1}\big|^2 }
      \qp{L
        \frac{\big|{\jump{\vec u_h}_n\big|} + \big|{\jump{\vec u_h}_{n+1}\big|}}{h_n}
        +
        \Norm{\del_x \vec u_h}_{\leb{\infty}(I_n)}}\Cfu
      \\
      &
      \qquad
      + 2h_n
      \Bigg(
      \sum_{k=0}^{p} {p+1 \choose k} 
      \Big( h_n^{p+1} 
      \Norm{\del_x^{k+1} \vec u_h}_{\leb{\infty}(I_n)} 
      +
      L h_n^{p-k} b
      \qp{\big|\jump{\vec u_h}_n\big| + \big|\jump{\vec u_h}_{n+1}\big|}
      \Big)
      \\
      & \qquad\qquad\qquad\qquad\qquad\qquad\qquad\qquad\qquad\qquad\qquad\qquad\qquad\qquad
      \times \norm{ \del_x^{p+1-k} \D\vec f(\vec u_h)}\Bigg)^2,
      \\
      E_3 
      &:=
      2C_{inv}^2 L^2
     \Cfu^2
      \norm{\vec u_h}_{W^{1,\infty}}^2
      \sum_{n=0}^{N-1} 
      h_n
      \qp{
        \big|\jump{\vec u_h}_n\big|^2 + \big|\jump{\vec u_h}_{n+1}\big|^2}   \\
      &\qquad\qquad
      +
      16C_{inv}^2 L^4 \Cfu^2
      \sum_{n=0}^{N-1}
      \frac{1}{h_n}
      \qp{ \big|{\jump{\vec u_h}_n}\big|^4 +  \big|{\jump{\vec u_h}_{n+1}\big|}^4  }
    \end{split}
  \end{equation}
  where 
  $b:= \| l_p\|_{W^{p+1,\infty}} +  \| l_{p+1}\|_{W^{p+1,\infty}}$.
\end{Lem}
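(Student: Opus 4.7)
The plan is to split the residual $\vec R$ from \eqref{def:residual} into three contributions
\begin{equation*}
    \vec R \;=\; \underbrace{\pdt\bigl(\hatu - \vec u_h\bigr)}_{=:T_1}
    \;+\; \underbrace{\pdx\bigl(\vec f(\vec u_h) - \hatf\bigr)}_{=:T_2}
    \;+\; \underbrace{\pdx\bigl(\vec f(\hatu) - \vec f(\vec u_h)\bigr)}_{=:T_3},
\end{equation*}
so that, by Cauchy--Schwarz, $\Norm{\vec R}_{\leb{2}(I)}^2 \leq 3\bigl(\Norm{T_1}_{\leb{2}(I)}^2 + \Norm{T_2}_{\leb{2}(I)}^2 + \Norm{T_3}_{\leb{2}(I)}^2\bigr)$; the claim reduces to showing $\Norm{T_j}_{\leb{2}(I)}^2 \leq E_j$ for $j=1,2,3$.

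For $T_1$, the plan is simply to reapply Lemma \ref{lem:hatu} with $\pdt \vec u_h$ in the role of $\vec u_h$: time-differentiating the conditions \eqref{hatuuc}--\eqref{eq:hatu-bcs2} defining $\hatu$ and using the Lipschitz continuity \eqref{eq:lipschitz fluxes} of $\vec w$, one finds
\begin{equation*}
    \bigl|\pdt\hatu(x_n^+) - \pdt\vec u_h(x_n^+)\bigr| \leq L\,\bigl|\jump{\pdt\vec u_h}_n\bigr|,
\end{equation*}
so the $\leb{2}$ bound \eqref{hatudiff} applied to $\pdt(\hatu - \vec u_h)$ yields $\Norm{T_1}_{\leb{2}(I)}^2 \leq E_1$. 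For $T_3$, the plan is to use the chain-rule split
\begin{equation*}
    T_3 = \bigl[\D\vec f(\hatu) - \D\vec f(\vec u_h)\bigr]\pdx\vec u_h
    + \D\vec f(\vec u_h)\,\pdx(\hatu - \vec u_h)
    + \bigl[\D\vec f(\hatu) - \D\vec f(\vec u_h)\bigr]\pdx(\hatu - \vec u_h),
\end{equation*}
and bound each piece using the Hessian estimate \eqref{eq:consts} (which yields $\Norm{\D\vec f(\hatu) - \D\vec f(\vec u_h)} \leq \Cfu\Norm{\hatu - \vec u_h}$), the $\leb{2}$ bound \eqref{hatudiff}, the inverse inequality of Lemma \ref{invineq} applied to the polynomial $\hatu - \vec u_h \in \fes_{p+1}\vert_{I_n}$, and an elementary bound on $\Norm{\D\vec f(\vec u_h)}_{\leb{\infty}}$ obtained by integrating the Hessian estimate from a reference state of $\mathfrak{O}$. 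The quartic-jump term in $E_3$ emerges when the $\leb{2}$ bound on $\hatu - \vec u_h$ is combined with the inverse inequality on $\pdx(\hatu - \vec u_h)$.

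The main obstacle is the bound $\Norm{T_2}_{\leb{2}(I)}^2 \leq E_2$. My plan is, on each $I_n$, to decompose
\begin{equation*}
    \hatf - \vec f(\vec u_h)
    = \bigl[\hatf - \hP_{p+1}\vec f(\vec u_h)\bigr]
    + \bigl[\hP_{p+1}\vec f(\vec u_h) - \vec f(\vec u_h)\bigr].
\end{equation*}
Orthogonality \eqref{ortho} forces the first bracket into the two-dimensional subspace $\operatorname{span}\{l_p^n,l_{p+1}^n\}$ of $\fes_{p+1}\vert_{I_n}$; its two coefficients are determined by comparing the boundary conditions \eqref{eq:hatf-bcs} with the values of $\hP_{p+1}\vec f(\vec u_h)$ at $x_n^\pm$, and I would estimate them by $\Cfu L\,|\jump{\vec u_h}|$ using the Hessian bound on $\vec f$ and Lipschitz continuity of $\vec w$; the scaling $\Norm{\pdx l^n_k}_{\leb{2}(I_n)} = O(h_n^{-1/2})$, summed over $n$, yields the first sub-term of $E_2$. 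The second bracket is a standard $\leb{2}$-projection error bounded via \eqref{eq:l2-proj-error}, combined with a Fa\`a di Bruno expansion of $\pdx^{p+2}\vec f(\vec u_h)$; because $\pdx^{p+2}\vec u_h = 0$, this expansion truncates at $k=p$, producing the binomial sum $\binom{p+1}{k}\norm{\pdx^{p+1-k}\D\vec f(\vec u_h)}$, while the alternative contribution $L h_n^{p-k}\,b\,(|\jump{\vec u_h}_n|+|\jump{\vec u_h}_{n+1}|)$ appears when derivatives of $\vec u_h$ are swapped for those of $\hatu$ via \eqref{hatuderv}. The hardest part will be coordinating the intermediate projection, the orthogonality-based two-dimensional expansion, and the Fa\`a di Bruno terms so that the constants assemble precisely into the stated formula for $E_2$; by contrast, Step 1 is essentially automatic and Step 2 amounts to a careful chain-rule plus inverse-inequality computation.
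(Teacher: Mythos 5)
Your splitting $\vec R = T_1 + T_2 + T_3$ with $T_2 = \pdx(\vec f(\vec u_h)-\hatf)$ and $T_3 = \pdx(\vec f(\hatu)-\vec f(\vec u_h))$, each bounded separately, contains a genuine gap: the individual bounds $\Norm{T_2}_{\leb{2}}^2\leq E_2$ and $\Norm{T_3}_{\leb{2}}^2\leq E_3$ are false in general. The term $\D\vec f(\vec u_h)\,\pdx(\hatu-\vec u_h)$ inside $T_3$ carries the $O(1)$ factor $\Norm{\D\vec f(\vec u_h)}_{\leb{\infty}}$, not the Hessian constant $\Cfu$, and after the inverse inequality it is of size $h_n^{-1/2}\abs{\jump{\vec u_h}}$ per cell, i.e.\ squared it contributes $h_n^{-1}\abs{\jump{\vec u_h}}^2$ -- one full order worse than anything in $E_2+E_3$ (whose terms scale like $h_n\abs{\jump{\vec u_h}}^2$ and $h_n^{-1}\abs{\jump{\vec u_h}}^4$). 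A clean counterexample is a linear flux $\vec f(\vec u)=\mat A\vec u$ with $\mat A\neq 0$: then $\Cfu=0$ and all the $\del_x^{p+1-k}\D\vec f$ terms vanish, so $E_2=E_3=0$, yet $T_3=\mat A\,\pdx(\hatu-\vec u_h)\neq 0$ whenever the jumps are nonzero (and $T_2=-T_3$). This shows the essential point: $T_2$ and $T_3$ are individually large and cancel only in their sum, and your decomposition destroys exactly that cancellation.

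The paper's proof is built to capture it. It splits off $\vec R_3 := \hP_p\qp{\pdx\vec f(\hatu)}-\pdx\hatf\in\rV_p$ as a single object, tests it against itself, replaces $\pdx\hatf$ using the variational definition \eqref{hatfuc}, and integrates by parts; the boundary terms vanish precisely because of the flux assumption \eqref{nfluxes} (this is where Remark \ref{rem:nf2} applies -- your plan never uses this structure). One is left with $\Norm{\vec R_3}_{\leb{2}}^2=-\sum_n\int_{I_n}\qp{\vec f(\hatu)-\vec f(\vec u_h)}\cdot\pdx\vec R_3$, and the decisive step is to insert the piecewise constant $\D\vec f(\hP_0\vec u_h)$: since $\pdx\vec R_3\in\fes_{p-1}$, the orthogonality \eqref{hatuuc}/\eqref{ortho} of $\hatu-\vec u_h$ to $\fes_{p-1}$ annihilates the leading linear term, and what survives carries either $\D\vec f(\hP_0\vec u_h)-\D\vec f(\vec u_h)=\Oh(h_n\Cfu\norm{\vec u_h}_{\sob1\infty})$ or $\abs{\hatu-\vec u_h}^2$, exactly compensating the $h_n^{-1}$ from the inverse inequality and producing the two terms of $E_3$. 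The remaining piece $\vec R_2=\pdx\vec f(\hatu)-\hP_p\qp{\pdx\vec f(\hatu)}$ is a projection error of $\pdx\vec f(\hatu)$, bounded by a Leibniz (product-rule) expansion of $\del_x^{p+1}\qp{\D\vec f(\vec u_h)\pdx\hatu}$ together with \eqref{hatuderv}, which is where $E_2$ comes from (not a Fa\`a di Bruno expansion of $\vec f(\vec u_h)$, and note also that your coefficient estimate $\Cfu L\abs{\jump{\vec u_h}}$ for $\hatf-\hP_{p+1}\vec f(\vec u_h)$ would require the first derivative of $\vec f$, not its Hessian). Your treatment of $T_1$ coincides with the paper's $\vec R_1$ and is fine; the rest needs the weak definition of $\hatf$ and the orthogonality cancellation to reach the stated estimate.
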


\begin{proof}
  Recalling the definition of $\vec R$
  \begin{equation}
  \vec R
  :=    
  \pdt \hatu 
  +
  \pdx {\vec f}(\hatu) 
  =
  \pdx {\vec f}(\hatu)
  -
  \pdx \hatf 
  +
  \pdt\hatu
  -
  \pdt \vec u_h,
  \end{equation}
  we begin by splitting $\vec R$ into three quantities via the
  $\leb{2}$ projection of $\pdx \vec f(\hatu)$, that is,
  \begin{equation}
    \vec R
    =
    \pdt\qp{\hatu - \vec u_h} 
    +
    \qp{\pdx \vec f(\hatu) - \hP_p\qp{\vec f (\hatu)_x}}
    +
    \qp{\hP_p\qp{\vec f (\hatu)_x} - \hatf_x} =: \vec R_1 + \vec R_2 + \vec R_3,
  \end{equation}
  and bounding each of these individually.

  Forming the time derivative of \eqref{hatudiff} we immediately obtain
  \begin{equation}
    \label{190601} 
    \Norm{\vec R_1}_{\leb{2}(I_n)}^2
    =
    \Norm{\pdt(\hatu - \vec u_h)}_{\leb{2}(I_n)}^2 
    \leq
    L^2 h_n 
    \qp{ \big|\jump{\pdt \vec u_h}_n\big|^2  + \big|\jump{\pdt\vec u_h}_{n+1}\big|^2 }.
  \end{equation}
  For the term involving $\vec R_2$ we further split the term and evaluate derivatives, giving 
  \begin{equation}
    \label{eq:260600}
    \begin{split}
      \Norm{\hP_p \qp{\pdx \vec f (\hatu)} - \pdx \vec f(\hatu)}_{\leb{2}(I_n)} 
      &\leq 
      \Norm{\hP_p\qp{\D\vec f (\hatu) \pdx\hatu}
        -
        \hP_p
        \qp{
          \D\vec  f(\vec u_h )\pdx\hatu
        }
      }_{\leb{2}(I_n)} 
      \\
      &\qquad +
      \Norm{ \D\vec  f(\vec u_h ) \pdx \hatu
        -
        \D \vec f (\hatu) \pdx \hatu }_{\leb{2}(I_n)} 
      \\
      &\qquad +
      \Norm{\hP_p\qp{\D\vec f (\vec u_h) \pdx\hatu}
        -
        \D\vec f(\vec u_h)\pdx \hatu}_{\leb{2}(I_n)} 
      \\
      &\leq 
      2 \Norm{ \pdx \hatu}_{\leb{\infty}(I_n)}
      \Cfu
      \Norm{ \hatu  - \vec u_h   }_{\leb{2}(I_n)} 
      \\
      & \qquad +
      \Norm{\hP_p\qp{\D\vec f (\vec u_h) \pdx\hatu} - \D\vec f(\vec u_h) \pdx \hatu}_{\leb{2}(I_n)} 
      {}
    \end{split}
  \end{equation}
  since the $L_2$-projection is stable and satisfies $\Norm{\hP_p
    g}_{\leb{2}(\W)} \leq \Norm{g}_{\leb{2}(\W)}$ for any
  $g\in\leb{2}(\W)$. In addition from (\ref{eq:l2-proj-error}) we have
  that 
  \begin{equation}
    \label{eq:260601}
    \Norm{\hP_p\qp{\D\vec f (\vec u_h) \pdx \hatu}
      - \D\vec f(\vec u_h) \pdx\hatu}_{\leb{\infty}(I_n)}
    \leq 
    C_p h_n^{p+1} 
    \norm{\D\vec f(\vec u_h)\pdx\hatu}_{\sob{p+1}{\infty}(I_n)}.
  \end{equation}
  By the product rule we have inside $I_n$
  \begin{equation}
    \label{eq:260602}
    \begin{split}
      \del_x^{p+1}
      \qp{ \D\vec f(\vec u_h) \pdx \hatu}
      &=
      \sum_{k=0}^{p+1} {p+1 \choose k}\qp{\del_x^{k+1} \hatu } \qp{ \del_x^{p+1-k} \D\vec f(\vec u_h)}
      \\
      &=
      \sum_{k=0}^{p} {p+1 \choose k}\qp{\del_x^{k+1} \hatu } \qp{ \del_x^{p+1-k} \D\vec f(\vec u_h)}.
    \end{split}
  \end{equation}
  as $\hatu \in \rV_{p+1}.$ Using the properties of the derivatives of
  the reconstruction \eqref{hatuderv} in \eqref{eq:260602} we have that
  \begin{equation}
    \label{eq:260603}
    \begin{split}
      h_n^{p+1}
      \Norm{ \del_x^{p+1} \qp{ \D\vec f(\vec u_h)\cdot \hatu_x}}_{\leb{\infty}(I_n)}
      &\leq
       h_n^{p+1}
      \sum_{k=0}^p {p+1 \choose k} \Norm{\partial_x^{k+1} \hatu}_{\leb{\infty}(I_n)} \Norm{\partial_x^{p+1-k} \D \vec f(\vec u_h)}_{\leb{\infty}(I_n)}
      \\
      &\leq \sum_{k=0}^{p}{p+1 \choose k} 
      \bigg(
      h_n^{p+1} \Norm{\del_x^{k+1} \vec u_h}_{\leb{\infty}(I_n)} 
      \\ 
      &\qquad + Lh_n^{p-k} b_{k+1}
      \qp{\big|\jump{\vec u_h}_n\big| +\big|\jump{\vec u_h}_{n+1}\big|}\bigg)
      \Norm{ \del_x^{p+1-k}  \D \vec f(\vec u_h)}_{\leb{\infty}(I_n)}.
    \end{split}
  \end{equation}
  Inserting \eqref{eq:260603} into \eqref{eq:260601} gives
  \begin{equation}
    \label{eq:260604}
    \begin{split}
      &\Norm{\hP_p\qp{\D\vec f (\vec u_h)\pdx\hatu}
        - \D\vec f(\vec u_h)\pdx\hatu}_{\leb{\infty}(I_n)} 
      \\
      &\qquad\qquad\qquad \leq 
      C_p  \sum_{k=0}^{p} \bigg( {p+1 \choose k}
      \bigg( h_n^{p+1} \Norm{\del_x^{k+1} \vec u_h}_{\leb{\infty}(I_n)} 
        \\
        &\qquad\qquad\qquad\qquad\qquad\qquad+
        L h_n^{p-k} b_{k+1}
        \qp{\big|\jump{\vec u_h}_n\big| +\big|\jump{\vec u_h}_{n+1}\big|}\bigg)
      \Norm{ \del_x^{p+1-k} \D \vec f(\vec u_h)}_{\leb{\infty}(I_n)} \bigg).
    \end{split}
  \end{equation}
  Therefore, we can infer from \eqref{eq:260600} that
  \begin{equation}
    \label{eq:260605}
    \begin{split}
      \Norm{\vec R_2}_{\leb{2}(I_n)}^2 
      &\leq
      8 \Cfu L^2 h_n 
      \qp{\big|\jump{\vec u_h}_n\big|^2  + \big|\jump{\vec u_h}_{n+1}\big|^2 }
      \Norm{\pd x \hatu}_{\leb{\infty}(I_n)} 
      \\
      &\qquad + 
      2 C_p^2 h_n \Bigg( \sum_{k=0}^{p} {p+1 \choose k} \Big( h_n^{p+1} \Norm{\del_x^{k+1} \vec u_h}_{\leb{\infty}(I_n)} 
      \\
      &\qquad\qquad
      +
      h_n^{p-k} b_{k+1}
      \qp{\big|\jump{\vec u_h}_n\big| + \big|\jump{\vec u_h}_{n+1}\big|} \Norm{ \del_x^{p+1-k} \D\vec f(\vec u_h)}_{\leb{\infty}(I_n)}\Big)\Bigg)^2.
    \end{split}
  \end{equation}
Using the fact that
\begin{equation}
  \Norm{\del_x \hatu}_{\leb{\infty}(I_n)}
  \leq
  L\frac{\big|\jump{\vec u_h}_n\big| + \big|\jump{\vec u_h}_{n+1}\big|}{h_n} + \Norm{\pd x {\vec u_h}}_{\leb{\infty}(I_n)}
\end{equation}
equation \eqref{eq:260605} implies the desired estimate for
$\Norm{\vec R_2}_{\leb{2}(I)}^2$.

To conclude we will estimate the term containing $\vec R_3$. Note that $\vec R_3
\in \rV_p.$ Using the definitions of $\hatu$ and $\hatf$ as well as
integration by parts we find 
\begin{equation}
  \label{eq:r3}
  \begin{split}
    \Norm{\vec R_3}_{\leb{2}(I)}^2 
    &=
    \sum_{n=0}^{N-1} \int_{I_n} \norm{\vec R_3}^2 \d x
    =
    \sum_{n=0}^{N-1} \int_{I_n} \qp{\hP_p \qp{\pd x{ \vec f(\hatu)}} - \pd x \hatf} \cdot \vec R_3 \d x
    \\
    &= 
    \sum_{n=0}^{N-1} \int_{I_n} \qp{\pd x {\vec f (\hatu)} - \pd x \hatf } \cdot \vec R_3\d x\\
    &
    =
    \sum_{n=0}^{N-1} \int_{I_n} \qp{\pd x {\vec f (\hatu)} - \pdx {\vec f(\vec u_h)} } \cdot \vec R_3\d x
    \\
    & \qquad\qquad   - \sum_{n=0}^{N-1}\qp{ {\vec f}(\vec w(\vec u_h(x_n^-),\vec u_h(x_n^+)))\cdot \jump{\vec R_3}_n+ \jump{{\vec f}({\vec u}_h)\cdot \vec R_3}_n}.
\end{split}
\end{equation}
Now upon integrating by parts, we see that
\begin{equation}
  \begin{split}
    \Norm{\vec R_3}_{\leb{2}(I)}^2 
    &=
    -\sum_{n=0}^{N-1} \int_{I_n} \qp{\vec f (\hatu) - \vec f(\vec u_h) } \cdot \pd x {\vec R_3}\d x.
\end{split}
\end{equation}

Using the orthogonality property (\ref{hatuuc}) taking $\phi = \D \vec
f(\hP_0 \vec u_h)$ we have that
\begin{equation}\label{123}
  \begin{split}
    \Norm{\vec R_3}_{\leb{2}(I)}^2
    &
    \leq 
    \sum_{n=0}^{N-1} 
    \int_{I_n}
    \bigg[ 
    \qp{\D \vec f(\hP_0 \vec u_h) - \D \vec f(\vec u_h)} \qp{\hatu - \vec u_h}
    \\
    &\qquad  
    +
    \sum_{|\vec \beta|=2}
    \qp{\frac{2}{\vec \beta!}
      \int_0^1 (1-t) \D^{\vec\beta} 
      \vec f ( \vec u_h + t(\hatu - \vec u_h)) \d t}
    (\hatu - \vec u_h)^{\vec \beta}\bigg] \pdx \vec R_3
    \d x
    \\
    &
    \leq C_{inv}  \Cfu
    \norm{\vec u_h}_{\sob{1}{\infty}}
    \Norm{\hatu - \vec u_h}_{\leb{2}(I)} 
    \Norm{\vec R_3}_{\leb{2}(I)}\\
    &\qquad+ C_{inv}\Cfu
    \sqrt{ \sum_{n=0}^{N-1} \frac{1}{h_n^2}   \int_{I_n} \norm{\hatu - \vec u_h}^4\d x} \Norm{\vec R_3}_{\leb{2}(I)},
 \end{split}
\end{equation}
by the inverse inequality \eqref{invineq}, where $\D^{\vec\beta}\vec
f$ is the partial derivative of $\vec f$ specified by the multiindex
$\vec \beta$.  Note that $ \norm{\vec u_h}_{\sob{1}{\infty}} $ in \eqref{123} is to be understood as $\max_{n=1,\dots,N} \norm{\vec u_h|_{I_n}}_{\sob{1}{\infty}(I_n)}$.
 Therefore,
\begin{equation}
  \Norm{\vec R_3}_{\leb{2}(I)}
  \leq C_{inv}\Cfu
  \qp{
  \norm{\vec u_h}_{\sob{1}{\infty}}
  \Norm{\hatu - \vec u_h}_{\leb{2}(I)} 
  +
  \sqrt{ \sum_{n=0}^{N-1} \frac{1}{h_n^2}
    \int_{I_n} \norm{\hatu - \vec u_h}^4\d x
  }}.
\end{equation}
In view of the boundedness of the Legendre polynomials and \eqref{haturep} this implies
\begin{equation}
  \begin{split}
  \Norm{\vec R_3}_{\leb{2}(I)}
  &\leq C_{inv}\Cfu \bigg(\norm{\vec u_h}_{\sob{1}{\infty}}\sqrt{\sum_{n=0}^{N-1} h_n L^2\qp{ \big|{\jump{\vec u_h}_n}\big|^2 +\big|{\jump{\vec u_h}_{n+1}\big|^2}  }}
   \\
   &\qquad +
    \sqrt{ \sum_{n=0}^{N-1} \frac{1}{h_n} L^4 \qp{ \big|{\jump{\vec u_h}_n}\big|^4 +  \big|{\jump{\vec u_h}_{n+1}}\big|^4    } \d x} \bigg),
\end{split}
\end{equation}
concluding the proof.
\end{proof}

\begin{Rem}[general numerical fluxes]
  \label{rem:nf2}
  The assumption on the numerical fluxes \eqref{nfluxes} was used in the above proof
  in order to estimate $\vec R_3.$ If we used more general numerical
  fluxes we would get additional contributions in the estimate
  \eqref{eq:r3} which would not be of optimal order in general. In
  particular, it is not sufficient for the numerical fluxes to be consistent and
  monotone.
\end{Rem}

\begin{Lem}[stability of the reconstruction]
  \label{lem:stab}
  Let $\vec f \in {\cont{p+2}}(U,\rR^d)$  satisfy (\ref{eq:ecl})
  and  let  $\vec u$ be an entropy solution of
  \eqref{eq:cl} with periodic boundary conditions. Then, provided $\hatu$ takes values in $\mathfrak{O},$ for $0 \leq t
  \leq T$ the error between the reconstruction $\hatu$ and $\vec u$
  satisfies
  \begin{equation}
    \begin{split}
      \Norm{\vec u(\cdot,t) - \hatu(\cdot, t)}_{\leb{2}(I)}^2
      &\leq 
      \Cetad^{-1} E(t){}
      \exp\qp{\int_{{0}}^t\frac{\Cetau \Cfu \Norm{\pdx \hatu(\cdot,\sigma)}_{\leb{\infty}(I)}
          + \Cetau^2}{\Cetad}\d \sigma}{}
    \end{split}
  \end{equation}
with
\begin{equation}\label{def:E}
\begin{split}
  E(t)
  &
  :=
  {\Cetau}\ent{\eta}{\vec u(\cdot, 0)}{\hatu(\cdot,0))}
  \\
  &\qquad 
  +
  \int_0^t 
  3 \sum_{n=0}^{N-1} h_n 
  \Bigg[ {L}^2 \qp{\big|\jump{\pdt {\vec u_h}}_n\big|^2  + \big|\jump{\pdt {\vec u_h}}_{n+1}\big|^2 }
  \\
  &
  \qquad 
  +
  4 L^2 
  \qp{\big|\jump{\vec u_h}_n\big|^2  + \big|\jump{\vec u_h}_{n+1}\big|^2 }
  \qp{L \frac{\big|{\jump{\vec u_h}_n}\big| + \big|{\jump{\vec u_h}_{n+1}}\big|}{h_n}
    +
    \Norm{\pdx {\vec u_h}}_{\leb{\infty}(I_n)}} 
  {\Cfu}
  \\
  &
  \qquad +
  2 \Bigg( \sum_{k=0}^{p}{p+1 \choose k} \Big( h_n^{p+1} \Norm{\partial_x^{k+1} \vec u_h}_{\leb{\infty}(I_n)} 
  +
  |h_n|^{p-k} L b_k
  \qp{\big|\jump{\vec u_h}_n \big| + \big|\jump{\vec u_h}_{n+1}\big|}
  \Big)
  \\
  & \qquad \qquad \times \Norm{ \del_x^{p+1-k} \D \vec f(\vec u_h)}_{\leb{\infty}(\W)}\Bigg)^2
  \\
  & \qquad +
  2C_{inv}^2 {\Cfu^2}
  \norm{\vec u_h}_{W^{1,\infty}}^2
  L^2 \qp{ \big|{\jump{\vec u_h}_n\big|^2 +\big|\jump{\vec u_h}_{n+1}}^2\big|}
  \\
  &
  \qquad 
  +
  16C_{inv}^2 {\Cfu^2}\frac{1}{h_n^2}
  L^4\qp{ \big|\jump{\vec u_h}_n\big|^4 +  \big|\jump{\vec u_h}_{n+1}\big|^4    }
  \Bigg] \d s,
\end{split}
\end{equation}
All the quantities inside the integral on the right hand side of
\eqref{def:E} are evaluated at time $s$.
\end{Lem}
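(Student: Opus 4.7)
\begin{Proof}[of Lemma \ref{lem:stab} (proposal)]
The plan is to assemble the three preceding pieces: the relative-entropy/perturbation bound for the reconstruction in Lemma~\ref{lem:relenuc}, the a posteriori control on the residual $\vec R$ in Lemma~\ref{lem:resest}, and Gronwall's inequality (Lemma~\ref{lem:gronwall}). The assumption that $\hatu$ takes values in $\mathfrak{O}$ is exactly what is needed to invoke Lemma~\ref{lem:relenuc}, and the hypothesis $\vec f\in\cont{p+2}(U,\rR^d)$ ensures all derivatives appearing in Lemma~\ref{lem:resest} are well-defined.

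First I would rewrite the inequality of Lemma~\ref{lem:relenuc} in the form required by the Gronwall lemma. Setting $\phi(s):=\Norm{\vec u(\cdot,s)-\hatu(\cdot,s)}_{\leb{2}(I)}^{2}$, dividing through by $\Cetad$, and keeping the initial-data contribution as a relative entropy rather than bounding it by $\Cetau\Norm{\vec u_0-\hatu_0}^{2}$ (this is the step that produces the coefficient $\Cetau\ent{\eta}{\vec u(\cdot,0)}{\hatu(\cdot,0)}$ in $E(t)$), one arrives at
\begin{equation*}
\phi(t)\;\leq\;\int_{0}^{t}a(s)\,\phi(s)\,\d s\;+\;b(t),
\end{equation*}
where $a(s)=\Cetad^{-1}\bigl(\Cetau\Cfu\Norm{\pdx\hatu(\cdot,s)}_{\leb{\infty}(I)}+\Cetau^{2}\bigr)$ is nonnegative and integrable on $[0,T]$ (recall $\hatu\in\rV_{p+1}$ is piecewise polynomial, hence $\pdx\hatu$ is bounded in space for almost every $s$), and $b(t)=\Cetad^{-1}\bigl(\Cetau\ent{\eta}{\vec u_{0}}{\hatu_{0}}+\Norm{\vec R}_{\leb{2}(I\times(0,t))}^{2}\bigr)$ is nonnegative and manifestly non-decreasing in $t$ (the residual contribution is an integral of a nonnegative integrand over $(0,t)$). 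The hypotheses of Lemma~\ref{lem:gronwall} are therefore satisfied.

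Applying Lemma~\ref{lem:gronwall} yields
\begin{equation*}
\Norm{\vec u(\cdot,t)-\hatu(\cdot,t)}_{\leb{2}(I)}^{2}\;\leq\;b(t)\exp\!\Big(\!\int_{0}^{t}a(s)\,\d s\Big),
\end{equation*}
which already has the exponential factor of the stated estimate. The remaining step is to replace $\Norm{\vec R}_{\leb{2}(I\times(0,t))}^{2}$ inside $b(t)$ by the explicit a posteriori quantities appearing in $E(t)$. Since
\[
\Norm{\vec R}_{\leb{2}(I\times(0,t))}^{2}=\int_{0}^{t}\Norm{\vec R(\cdot,s)}_{\leb{2}(I)}^{2}\,\d s,
\]
Lemma~\ref{lem:resest} applied pointwise in time gives
\[
\Norm{\vec R(\cdot,s)}_{\leb{2}(I)}^{2}\leq 3\bigl(E_{1}(s)+E_{2}(s)+E_{3}(s)\bigr),
\]
and substituting this in and comparing the resulting expression term by term with \eqref{def:E} shows that the bracketed integrand in $E(t)$ is precisely $3(E_{1}+E_{2}+E_{3})$, so the estimate follows.

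The only delicate point I anticipate is bookkeeping: one must verify that each summand in \eqref{def:E} really does correspond to $3E_{i}$ from Lemma~\ref{lem:resest} with the same Legendre-polynomial constant $b_{k}$ (or $b$) and the same combinatorial factor ${p+1\choose k}$, and that the factor $\Cetau$ in front of $\ent{\eta}{\vec u_{0}}{\hatu_{0}}$ emerges correctly from keeping the relative entropy instead of bounding it by the $\leb{2}$ norm. All other steps are routine once the Gronwall inequality is in place; no additional analytical ideas beyond those already developed in \S\ref{sec:err} are required.
\end{Proof}
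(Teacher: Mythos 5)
Your proposal is correct and takes essentially the same route as the paper: its proof of Lemma \ref{lem:stab} is exactly the one-line combination of Lemma \ref{lem:relenuc} with Lemma \ref{lem:resest} (Gronwall, Lemma \ref{lem:gronwall}, being invoked just as in Theorem \ref{The:re}), and your substitution of the pointwise-in-time bound $\Norm{\vec R(\cdot,s)}_{\leb{2}(I)}^2\le 3\qp{E_1+E_2+E_3}$ into the Gronwalled estimate is precisely what is intended. The only quibble concerns the initial-data term: the direct combination yields $\Cetau\Norm{\vec u_0-\hatu_0}_{\leb{2}(I)}^2$, which is equivalent, up to the constants in \eqref{eq:consts}, to the relative entropy of the initial data, so the factor $\Cetau$ in front of $\ent{\eta}{\vec u(\cdot,0)}{\hatu(\cdot,0)}$ in \eqref{def:E} does not literally arise from ``keeping'' the relative entropy as you suggest --- but this is the harmless bookkeeping issue you already flagged, not a gap.
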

\begin{proof}
  The proof follows by combining Lemmas \ref{lem:relenuc} and
  \ref{lem:resest}.
\end{proof}

\begin{The}[a posteriori error estimate]\label{lem:apee}
  Let $\vec f \in{\cont{p+2}}(U,\rR^d)$ and $\vec u$ be the entropy
  solution of \eqref{eq:cl} with periodic boundary conditions.
  Let  $\hatu$ takes values in $\mathfrak{O}.$ Then for
  $0 \leq t \leq T$ the error between the numerical solution $\vec
  u_h$ and $\vec u$ satisfies
  \begin{equation}\label{eq:apee}
    \begin{split}
      \Norm{\vec u(\cdot,t) - \vec u_h(\cdot, t)}_{\leb{2}(I)}^2
      &\leq
      \Cetad^{-1}{E(t)} {}
      \exp\qp{\int_{{0}}^t\frac{\Cetau\Cfu \Norm{\pdx \hatu(\cdot,\sigma)}_{\leb{\infty}(I)} + \Cetau^2}{\Cetad}\d \sigma}{}
      \\
      &\qquad 
      +
      {} L^2 \sum_n h_n \qp{ \big|{\jump{\vec u_h(\cdot, t)}_n}\big|^2 + \big|{\jump{\vec u_h(\cdot, t)}_{n+1}}\big|^2}
    \end{split}
  \end{equation}
  where $E$ is defined as in Lemma \ref{lem:stab}.
\end{The}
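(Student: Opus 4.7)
The plan is to obtain the final a posteriori estimate as a direct corollary of the preceding analysis by splitting the error via the reconstruction $\hatu$ and then applying the two key bounds already at our disposal. Specifically, I will write
\begin{equation*}
  \Norm{\vec u(\cdot,t) - \vec u_h(\cdot,t)}_{\leb{2}(I)}^2
  \;\leq\;
  2\Norm{\vec u(\cdot,t) - \hatu(\cdot,t)}_{\leb{2}(I)}^2
  + 2\Norm{\hatu(\cdot,t) - \vec u_h(\cdot,t)}_{\leb{2}(I)}^2
\end{equation*}
using the elementary inequality $(a+b)^2 \leq 2a^2 + 2b^2$ after the triangle inequality. (A minor bookkeeping point: the stated estimate does not carry a factor of $2$, so in practice I would absorb such constants into $E(t)$ or work directly at the level of the relative entropy, exactly as the proof of Theorem~\ref{The:re} does.)

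For the first term, I would invoke Lemma~\ref{lem:stab} verbatim. This lemma is itself the combination of Lemma~\ref{lem:relenuc} (giving the relative-entropy Gronwall inequality for $\vec u - \hatu$ with forcing $\Norm{\vec R}_{\leb{2}}^2$) and Lemma~\ref{lem:resest} (giving the fully computable bound on $\Norm{\vec R}_{\leb{2}}^2$ in terms of jumps of $\vec u_h$, jumps of $\partial_t \vec u_h$, and interior derivatives of $\vec u_h$). This yields precisely the exponential-times-$E(t)$ contribution in \eqref{eq:apee}.

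For the second term, I would apply the elementwise bound \eqref{hatudiff} from Lemma~\ref{lem:hatu}, namely $\Norm{\hatu - \vec u_h}_{\leb{2}(I_n)}^2 \leq L^2 h_n (|\jump{\vec u_h}_n|^2 + |\jump{\vec u_h}_{n+1}|^2)$, and sum over $n = 0,\dots, N-1$. Evaluating at the fixed time $t$ produces exactly the jump-sum contribution in \eqref{eq:apee}. Importantly, this term is purely a spatial quantity evaluated at time $t$ and does not enter the Gronwall loop, which is consistent with the fact that the relative-entropy machinery was applied only to the Lipschitz object $\hatu$ (not to $\vec u_h$, which is discontinuous across skeleton points).

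There is no genuine obstacle here: the proof is a one-line assembly of Lemma~\ref{lem:stab} and Lemma~\ref{lem:hatu}. The only subtlety worth flagging is the hypothesis that $\hatu$ takes values in the compact convex set $\mathfrak{O}$, which is needed in order to apply Lemma~\ref{lem:relenuc} (and which, as noted in the remark after Theorem~\ref{The:re}, is checkable a posteriori using the explicit representation formula \eqref{haturep} together with the $\leb{\infty}$ bounds \eqref{Legendre4} on the Legendre polynomials).
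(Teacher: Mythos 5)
Your proposal is correct and follows essentially the same route as the paper, whose proof of Theorem \ref{lem:apee} is exactly the one-line assembly of Lemma \ref{lem:stab} (itself Lemmas \ref{lem:relenuc} and \ref{lem:resest} combined) with the reconstruction bound \eqref{hatudiff} of Lemma \ref{lem:hatu} via the triangle inequality. Your remark about the missing factor of $2$ from $(a+b)^2\leq 2a^2+2b^2$ is a fair observation (the analogous bound in Theorem \ref{The:re} does carry these factors), and absorbing such constants is exactly how the statement should be read.
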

\begin{proof}
  The proof follows from Lemma \ref{lem:hatu} and Lemma \ref{lem:stab}.
\end{proof}

\begin{Rem}[optimality of the estimator]
  Assume that the entropy solution $\vec u$ and its time derivative $\pd t {\vec u}$ are $p+1$ 
times continuously differentiable in space and 
\begin{equation}
  \Norm{\vec u - \vec u_h}_{\leb{\infty}(0,T;\leb{2}(I))}
  +
  \Norm{\pd t {\vec u} - \pd t {\vec u_h}}_{\leb{\infty}(0,T;\leb{2}(I))}
  \leq
  C h^{p+1}.
\end{equation}
In that case it is expected that $\Norm{\pdx \hatu}_{\leb{\infty}(0,T;\leb{\infty}(I))}$ is bounded uniformly in $h$ and, moreover, the arguments from 
\cite[Rem 3.6]{MN06} indicate that 
 \[ \sum_n h_n \qp{\big|\jump{\pdt {\vec u_h}}_n\big|^2  + \big|\jump{\pdt {\vec u_h}}_{n+1}\big|^2 } \leq C h^{2p+2} \text{ and }
  \sum_n h_n \qp{\big|\jump{ {\vec u_h}}_n\big|^2  + \big|\jump{ {\vec u_h}}_{n+1}\big|^2 } \leq C h^{2p+2}
 \]
where $h=\max_n h_n.$
As, in addition,
\[ \frac{1}{h_n}\qp{\big|\jump{ {\vec u_h}}_n\big|  + \big|\jump{ {\vec u_h}}_{n+1}\big| } \]
is expected to be bounded, we expect $E$ in \eqref{eq:apee} to be of order $h^{2p+2}$ and the exponential term in \eqref{eq:apee} to be  bounded uniformly  in $h.$
Therefore, we claim  that our error estimator is of optimal order, for sufficiently smooth solutions.
This is supported by numerical evidence in Section \ref{sec:num}.
\end{Rem}

\begin{Rem}
 As can be seen in \cite{Daf10} the relative entropy stability estimate in Lemma \ref{lem:ltstab}
 can be localized in the sense that there is a computable $c>0$ depending on $\mathfrak{O}$
such that for every $[a,b] \subset I$  and $t>0$  
  \begin{equation}
    \Norm{\vec u(\cdot,t) - \vec v (\cdot, t) }_{\leb{2}([a,b])}
    \leq 
    C_1 \exp( C_2 t) \Norm{ \vec u_0 - \vec v_0}_{\leb{2}([a-ct,b+ct])}.
  \end{equation}
with $C_2$ depending on $\Norm{\del_x \vec v}_{\leb{\infty}(\{ (x,s) : x \in [a -cs , b+cs]\})}.$
This, in particular, shows that the arguments presented above allow for the construction of localized a posteriori error estimates.
\end{Rem}

\newcommand{\est}{{\cE}}


\section{Numerical experiments}\label{sec:num}

In this section we study the numerical behaviour of the error
indicators and compare this behaviour with the true error on two model
problems. The coding was done in \matlab under the framework provided
by \cite{HW08}. 

\begin{Defn}[estimated order of convergence]
  \label{def:EOC}
  Given two sequences $a(i)$ and $h(i)\downto0$,
  we define estimated order of convergence
  (\EOC) to be the local slope of the $\log a(i)$ vs. $\log h(i)$
  curve, i.e.,
  \begin{equation}
    \EOC(a,h;i):=\frac{ \log(a(i+1)/a(i)) }{ \log(h(i+1)/h(i)) }.
  \end{equation}
\end{Defn}

\begin{Rem}[computed a posteriori indicator]
  We define
  \begin{equation}\label{eq:computed-apee}
    \begin{split}    
      \cE_t
      &:= 
      {\widetilde E(t)} {}
      \exp\qp{\int_{{0}}^t \Norm{\pdx \vec u_h(\cdot,\sigma)}_{\leb{\infty}(I)} 
        + 
        \frac{1}{h_n}
        \qp{\big|
            {\jump{\vec u_h(\cdot, t)}_n}\big|
            +
            \big|{\jump{\vec u_h(\cdot, t)}_{n+1}}\big|}
        \d \sigma}
      \\
      &\qquad +
      \sum_n h_n \qp{ \big|{\jump{\vec u_h(\cdot, t)}_n}\big|^2 + \big|{\jump{\vec u_h(\cdot, t)}_{n+1}}\big|^2},
    \end{split}
  \end{equation}
  where
\begin{equation}\label{def:computed-E}
  \begin{split}
    \widetilde E(t)
    &
    :=
    \ent{\eta}{\vec u(\cdot, 0)}{\hatu(\cdot,0))}
    +
    \int_0^t 
    \sum_{n=0}^{N-1} h_n 
    \Bigg[\qp{\big|\jump{\pdt {\vec u_h}}_n\big|^2  + \big|\jump{\pdt {\vec u_h}}_{n+1}\big|^2 }
      \\
      &
      \qquad \qquad 
      +
      \qp{\big|\jump{\vec u_h}_n\big|^2  + \big|\jump{\vec u_h}_{n+1}\big|^2 }
      \qp{\frac{\big|{\jump{\vec u_h}_n}\big| + \big|{\jump{\vec u_h}_{n+1}}\big|}{h_n}
        +
        \Norm{\pdx {\vec u_h}}_{\leb{\infty}(I_n)}}\Bigg].
  \end{split}
\end{equation}
Note that $\cE_t$ is equivalent to the bound in Lemma \ref{lem:apee} up to a constant in view of inverse inequalities. As such, $\cE_t$ is an \emph{aposteriori indicator}.
\end{Rem}

\begin{Defn}[effectivity index]
  \label{def:EI}
  The main tool deciding the quality of an estimator is the
  effectivity index (\EI) which is the ratio of the error and the
  estimator, \ie
  \begin{equation}
    \EI(\tn):= 
    \frac{\max_{t} \est_t}{\Norm{\vec u - \vec u_h}_{\leb{\infty}(0,T;\leb{2}(S^1))}}.
  \end{equation}
\end{Defn}

In both tests below for the temporal discretisation we choose an
explicit fourth order Runge-Kutta method. To test the asymptotic
behaviour of the estimator given in Theorem \ref{lem:apee}
we use a uniform timestep and uniform meshes that are fixed with
respect to time.  Hence for each test we have $\V n=\V 0=\fes$ and
$\tau_n=\tau(h)$ for all $n\in[1:N]$. We fix
the polynomial degree $p$ and two parameters $k,c$ and then compute a
sequence of solutions with $h=h(i)=2^{-i}$, and $\tau=ch^k$ for a
sequence of refinement levels $i=l,\dotsc,L$. 

\subsection{Test 1 : The scalar case - inviscid Burger's equation}
\label{sec:Burger}

We conduct a benchmarking experiment using the inviscid (scalar)
Burger's equation, \ie
\begin{equation}
  \label{eq:burger}
  \pdt u + \pdx \qp{\frac{u^2}{2}} = 0.
\end{equation}
Using an initial condition $u(x,0) = -\sin x$ over an interval $I =
[-\pi,\pi]$. It can be verified that, before shock formation, the
exact solution can be represented by an infinite sum of Bessel
functions, that is,
\begin{equation}
  \label{eq:test-1}
  u(x,t) = -2 \sum_{k=1}^\infty \frac{J_k(kt)}{kt} \sin{{kx}},
\end{equation}
where $J_k$ denotes the $k-th$ Bessel function. Note this is a
decaying sequence, hence we may approxmiate the solution by taking a
truncation of this series.

We discretise the problem (\ref{eq:burger}) using the dG scheme
(\ref{eq:sch2}) together with Engquist--Osher type fluxes. These fluxes
satisfy the assumptions (\ref{nfluxes})--(\ref{eq:lipschitz fluxes})
as shown in Remark \ref{rem:nf1}. Table
\ref{table:p-1} summarises the results for this test.

\begin{table}
  \caption{\label{table:p-1} In this test we computationally study the behaviour of the a posteriori indicator when the exact solution to the problem is given by (\ref{eq:test-1}). Note that this solution is only valid \emph{before} shock formation, as such $u$ is smooth.}
  \centering
  \subfloat[ A simulation with $p=1$.
  ]{
    \begin{tabular}{c|c|c|c|c|c}
$N$ & $\Norm{e_u}_{\leb{\infty}(\leb{2})}$ & EOC & $\max_t {\est_t}$ & EOC & EI \\
\hline
8 & 2.3336e-01 & 0.000 & 3.5500e-01 & 0.000 & 1.521 \\
16 & 8.6657e-02 & 1.429 & 1.3541e-01 & 1.390 & 1.563 \\
32 & 3.1863e-02 & 1.443 & 5.1422e-02 & 1.397 & 1.614 \\
64 & 1.1753e-02 & 1.439 & 1.9416e-02 & 1.405 & 1.652 \\
128 & 4.2916e-03 & 1.453 & 7.1950e-03 & 1.432 & 1.677 \\
256 & 1.5501e-03 & 1.469 & 2.6220e-03 & 1.456 & 1.692 \\
512 & 5.5526e-04 & 1.481 & 9.4403e-04 & 1.474 & 1.700 \\
1024 & 1.9779e-04 & 1.489 & 3.3723e-04 & 1.485 & 1.705 \\
2048 & 7.0216e-05 & 1.494 & 1.1990e-04 & 1.492 & 1.708 \\
4096 & 2.4879e-05 & 1.497 & 4.2518e-05 & 1.496 & 1.709 \\
\end{tabular}

  }
  \\
  \subfloat[ A simulation with $p=2$.
  ]{
    \begin{tabular}{c|c|c|c|c|c}
$N$ & $\Norm{e_u}_{\leb{\infty}(\leb{2})}$ & EOC & $\max_t {\est_t}$ & EOC & EI \\
\hline
8 & 2.2135e-02 & 0.000 & 9.5664e-02 & 0.000 & 0.000 \\
16 & 2.7472e-03 & 3.010 & 1.4455e-02 & 2.726 & 5.262 \\
32 & 3.4615e-04 & 2.988 & 2.1409e-03 & 2.755 & 6.185 \\
64 & 4.4617e-05 & 2.956 & 3.3881e-04 & 2.660 & 7.594 \\
128 & 5.6079e-06 & 2.992 & 5.1629e-05 & 2.714 & 9.207 \\
256 & 7.0465e-07 & 2.992 & 7.4392e-06 & 2.795 & 10.557 \\
512 & 8.8207e-08 & 2.998 & 1.0230e-06 & 2.862 & 11.598 \\
1024 & 1.1040e-08 & 2.998 & 1.3598e-07 & 2.911 & 12.317 \\
\end{tabular}

  }
\end{table}

\subsection{Test 2 : The system case - the $p$--system}
\label{sec:Euler}

In this case we conduct some benchmarking using the $p$--system, given
by:
\begin{equation}
  \label{eq:psystem}
  \begin{split}
    0 
    &=
    \pdt u - \pdx v
    \\
    0
    &=
    \pdt v - \pdx{\qp{p(u)}}.
  \end{split}
\end{equation}
We choose an initial condition $u(x,0) = \exp\qp{-10\norm{x}^2}$ and $v(x,0) = 0$ over an interval $I = [-5,5]$. 

We discretise (\ref{eq:psystem}) using the dG scheme (\ref{eq:sch2}) with a Roe flux (as described in Remark \ref{rem:nf1}). 
This class of fluxes satisfies the assumption on the fluxes (\ref{eq:lipschitz fluxes}) assuming $p$ is surjective. We take $p(u) = u^3 + u$.

We run the simulation on a sufficiently refined mesh and timestep to generate an accurate approximation to the solution and test the approximation rates for the method using this as a representation to the exact solution. Table \ref{table:psys-p-1} summaries the results for this test.

\begin{table}
  \caption{\label{table:psys-p-1}In this test we computationally study the behaviour of the a posteriori indicator applied to the $p$--system.}
  \centering
  \subfloat[ A simulation with $p=1$.
  ]{
    \begin{tabular}{c|c|c|c|c|c}
$N$ & $\Norm{e_u}_{\leb{\infty}(\leb{2})}$ & EOC & $\max_t {\est_t}$ & EOC & EI \\
\hline
16 & 1.5296e+00 & 0.000 & 3.2527e+00 & 0.000 & 2.126 \\
32 & 5.6355e-01 & 1.441 & 1.2362e+00 & 1.396 & 2.194 \\
64 & 2.0724e-01 & 1.443 & 4.6672e-01 & 1.405 & 2.252 \\
128 & 7.5565e-02 & 1.455 & 1.7283e-01 & 1.433 & 2.287 \\
256 & 2.7373e-02 & 1.465 & 6.3085e-02 & 1.454 & 2.305 \\
512 & 9.7873e-03 & 1.484 & 2.2669e-02 & 1.477 & 2.316 \\
\end{tabular}

  }
\end{table}

\newpage
\bibliographystyle{alpha}
\bibliography{nskbib,tristansbib,tristanswritings}
\end{document}